   \let\temp\relax
   \let\temp 
 \chardef\EPSFCatAt\the\catcode`\@
 \chardef\C@tColon\the\catcode`\:
 \chardef\C@tSemicolon\the\catcode`\;
 \chardef\C@tQmark\the\catcode`\?
 \chardef\C@tEmark\the\catcode`\!
 \chardef\C@tDqt\the\catcode`\"
 \def\PunctOther@{\catcode`\:=12
   \catcode`\;=12 \catcode`\?=12 \catcode`\!=12 \catcode`\"=12}
 \let\wlog@ld\wlog 
 \def\wlog#1{\relax} 
 \newdimen\XShift@ \newdimen\YShift@ 
 \newtoks\Realtoks
 \newdimen\Wd@ \newdimen\Ht@
 \newdimen\Wd@@ \newdimen\Ht@@
 \newdimen\TT@
 \newdimen\LT@
 \newdimen\BT@
 \newdimen\RT@
 \newdimen\XSlide@ \newdimen\YSlide@ 
 \newdimen\TheScale  
 \newdimen\FigScale  
 \newdimen\ForcedDim@@
 \newtoks\EPSFDirectorytoks@
 \newtoks\EPSFNametoks@
 \newtoks\BdBoxtoks@
 \newtoks\LLXtoks@  
 \newtoks\LLYtoks@
 \newif\ifNotIn@
 \newif\ifForcedDim@
 \newif\ifForceOn@
 \newif\ifForcedHeight@
 \newif\ifPSOrigin
 \newread\EPSFile@ 
  \def\ms@g{\immediate\write16}
 \newif\ifIN@\def\IN@{\expandafter\INN@\expandafter}
  \long\def\INN@0#1@#2@{\long\def\NI@##1#1##2##3\ENDNI@
    {\ifx\m@rker##2\IN@false\else\IN@true\fi}%
     \expandafter\NI@#2@@#1\m@rker\ENDNI@}
  \def\m@rker{\m@@rker}
  \newtoks\Initialtoks@  \newtoks\Terminaltoks@
  \def\SPLIT@{\expandafter\SPLITT@\expandafter}
  \def\SPLITT@0#1@#2@{\def\TTILPS@##1#1##2@{%
     \Initialtoks@{##1}\Terminaltoks@{##2}}\expandafter\TTILPS@#2@}
  \newtoks\Trimtoks@
 \def\ForeTrim@{\expandafter\ForeTrim@@\expandafter}
 \def\ForePrim@0 #1@{\Trimtoks@{#1}}
 \def\ForeTrim@@0#1@{\IN@0\m@rker. @\m@rker.#1@%
     \ifIN@\ForePrim@0#1@%
     \else\Trimtoks@\expandafter{#1}\fi}
  \def\Trim@0#1@{%
      \ForeTrim@0#1@%
      \IN@0 @\the\Trimtoks@ @%
        \ifIN@ 
             \SPLIT@0 @\the\Trimtoks@ @\Trimtoks@\Initialtoks@
             \IN@0\the\Terminaltoks@ @ @%
                 \ifIN@
                 \else \Trimtoks@ {FigNameWithSpace}%
                 \fi
        \fi
      }
   \newtoks\pt@ks
   \def\getpt@ks 0.0#1@{\pt@ks{#1}}
  \newtoks\Realtoks
  \def\Real#1{%
    \dimen2=#1%
      \SPLIT@0\the\pt@ks @\the\dimen2@
       \Realtoks=\Initialtoks@
            }
   \newdimen\Product
   \def\Mult#1#2{%
     \dimen4=#1\relax
     \dimen6=#2%
     \Real{\dimen4}%
     \Product=\the\Realtoks\dimen6%
        }
 \newdimen\Inverse
 \newdimen\hmxdim@ \hmxdim@=8192pt
 \def\Invert#1{%
  \Inverse=\hmxdim@
  \dimen0=#1%
  \divide\Inverse \dimen0%
  \multiply\Inverse 8}
   \def\Rescale#1#2#3{
              \divide #1 by 100\relax
              \dimen2=#3\divide\dimen2 by 100 \Invert{\dimen2}%
              \Mult{#1}{#2}%
              \Mult\Product\Inverse 
              #1=\Product}
  \def\Scale#1{\dimen0=\TheScale %
      \divide #1 by  1280 
      \divide \dimen0 by 5120 %
      \multiply#1 by \dimen0 
      \divide#1 by 10   
     }
 \newbox\scrunchbox
 \def\Scrunched#1{{\setbox\scrunchbox\hbox{#1}%
   \wd\scrunchbox=0pt
   \ht\scrunchbox=0pt
   \dp\scrunchbox=0pt
   \box\scrunchbox}}
 \def\Shifted@#1{%
   \vbox {\kern-\YShift@
       \hbox {\kern\XShift@\hbox{#1}\kern-\XShift@}%
           \kern\YShift@}}
 \def\cBoxedEPSF#1{{\leavevmode 
   \ReadNameAndScale@{#1}%
   \SetEPSFSpec@
   \ReadEPSFile@ \ReadBdB@x  
     \TrimFigDims@ 
     \CalculateFigScale@  
     \ScaleFigDims@
     \SetInkShift@
   \hbox{$\mathsurround=0pt\relax
         \vcenter{\hbox{%
             \FrameSpider{\hskip-.4pt\vrule}%
             \vbox to \Ht@{\offinterlineskip\parindent=\z@%
                \FrameSpider{\vskip-.4pt\hrule}\vfil 
                \hbox to \Wd@{\hfil}%
                \vfil
                \InkShift@{\EPSFSpecial{\EPSFSpec@}{\FigSc@leReal}}%
             \FrameSpider{\hrule\vskip-.4pt}}%
         \FrameSpider{\vrule\hskip-.4pt}}}%
     $\relax}%
    \CleanRegisters@ 
    \ms@g{ *** Box composed for the %
         EPS file \the\EPSFNametoks@}%
    }}
 \def\tBoxedEPSF#1{\setbox4\hbox{\cBoxedEPSF{#1}}%
     \setbox4\hbox{\raise -\ht4 \hbox{\box4}}%
     \box4
      }
 \def\bBoxedEPSF#1{\setbox4\hbox{\cBoxedEPSF{#1}}%
     \setbox4\hbox{\raise \dp4 \hbox{\box4}}%
     \box4
      }
  \let\BoxedEPSF\cBoxedEPSF
   \let\BoxedArt\BoxedEPSF
  \def\gLinefigure[#1scaled#2]_#3{%
        \BoxedEPSF{#3 scaled #2}}
  \def\EPSFxsize{\afterassignment\ForceW@\ForcedDim@@}
      \def\ForceW@{\ForcedDim@true\ForcedHeight@false}
  \def\EPSFysize{\afterassignment\ForceH@\ForcedDim@@}
      \def\ForceH@{\ForcedDim@true\ForcedHeight@true}
  \def\EmulateRokicki{%
       \let\epsfbox\bBoxedEPSF \let\epsffile\bBoxedEPSF
       \let\epsfxsize\EPSFxsize \let\epsfysize\EPSFysize} 
 \def\ReadNameAndScale@#1{\IN@0 scaled@#1@
   \ifIN@\ReadNameAndScale@@0#1@%
   \else \ReadNameAndScale@@0#1 scaled\DefaultMilScale @%
   \fi}
 \def\ReadNameAndScale@@0#1scaled#2@{
    \let\OldBackslash@\\%
    \def\\{\OtherB@ckslash}%
    \edef\temp@{#1}%
    \Trim@0\temp@ @%
    \EPSFNametoks@\expandafter{\the\Trimtoks@ }%
    \FigScale=#2 pt%
    \let\\\OldBackslash@
    }
 \def\SetDefaultEPSFScale#1{%
      \global\def\DefaultMilScale{#1}}
 \def \SetBogusBbox@{%
     \global\BdBoxtoks@{ BoundingBox:0 0 100 100 }%
     \global\def\BdBoxLine@{ BoundingBox:0 0 100 100 }%
     \ms@g{ !!! Will use placeholder !!!}%
     }
\gdef\P@S@{

 \def\ReadEPSFile@{
     \openin\EPSFile@\EPSFSpec@
     \relax  
  \ifeof\EPSFile@
     \ms@g{}%
     \ms@g{ !!! EPS FILE \the\EPSFDirectorytoks@
       \the\EPSFNametoks@\space WAS NOT FOUND !!!}%
     \SetBogusBbox@
  \else
   \begingroup
   \catcode`\%=12\catcode`\:=12\catcode`\!=12
   \catcode"00=14 \catcode"7F=14 \catcode`\\=14 
   \global\read\EPSFile@ to \BdBoxLine@ 
   \IN@0\P@S@ @\BdBoxLine@ @%
   \ifIN@ 
     \NotIn@true
     \loop   
       \ifeof\EPSFile@\NotIn@false 
         \ms@g{}%
         \ms@g{ !!! BoundingBox NOT FOUND IN %
            \the\EPSFDirectorytoks@\the\EPSFNametoks@\space!!! }%
         \SetBogusBbox@
       \else\global\read\EPSFile@ to \BdBoxLine@
       \fi
       \global\BdBoxtoks@\expandafter{\BdBoxLine@}%
       \IN@0BoundingBox:@\the\BdBoxtoks@ @%
       \ifIN@\NotIn@false\fi%
     \ifNotIn@
     \repeat
   \else
         \ms@g{}%
         \ms@g{ !!! \the\EPSFNametoks@\space is not PostScript.}%
         \ms@g{ !!! It should begin with the "\P@S@". }%
         \ms@g{ !!! Also, all other header lines until }%
         \ms@g{ !!!  "\pct@@ EndComments"  should begin with "\pct@@". }%
         \SetBogusBbox@
   \fi
  \endgroup\relax
  \fi
  \closein\EPSFile@ 
   }

  \def\ReadBdB@x{
   \expandafter\ReadBdB@x@\the\BdBoxtoks@ @}
  
  \def\ReadBdB@x@#1BoundingBox:#2@{
    \ForeTrim@0#2@%
    \IN@0atend@\the\Trimtoks@ @%
       \ifIN@\Trimtoks@={0 0 100 100 }%
         \ms@g{}%
         \ms@g{ !!! BoundingBox not found in %
         \the\EPSFDirectorytoks@\the\EPSFNametoks@\space !!!}%
         \ms@g{ !!! It must not be at end of EPSF !!!}%
         \ms@g{ !!! Will use placeholder !!!}%
       \fi
    \expandafter\ReadBdB@x@@\the\Trimtoks@ @%
   }
    
  \def\ReadBdB@x@@#1 #2 #3 #4@{
      \Wd@=#3bp\advance\Wd@ by -#1bp%
      \Ht@=#4bp\advance\Ht@ by-#2bp%
       \Wd@@=\Wd@ \Ht@@=\Ht@ 
       \LLXtoks@={#1}\LLYtoks@={#2}
      \ifPSOrigin\XShift@=-#1bp\YShift@=-#2bp\fi 
     }

   %
   \def\G@bbl@#1{}
   \bgroup
     \global\edef\OtherB@ckslash{\expandafter\G@bbl@\string\\}
   \egroup

  \def\SetEPSFDirectory{
           \bgroup\PunctOther@\relax
           \let\\\OtherB@ckslash
           \SetEPSFDirectory@}

 \def\SetEPSFDirectory@#1{
    \edef\temp@{#1}%
    \Trim@0\temp@ @
    \global\toks1\expandafter{\the\Trimtoks@ }\relax
    \egroup
    \EPSFDirectorytoks@=\toks1
    }

 \def\SetEPSFSpec@{%
     \bgroup
     \let\\=\OtherB@ckslash
     \global\edef\EPSFSpec@{%
        \the\EPSFDirectorytoks@\the\EPSFNametoks@}%
     \global\edef\EPSFSpec@{\EPSFSpec@}%
     \egroup}

  %
 \def\TrimTop#1{\advance\TT@ by #1}
 \def\TrimLeft#1{\advance\LT@ by #1}
 \def\TrimBottom#1{\advance\BT@ by #1}
 \def\TrimRight#1{\advance\RT@ by #1}

 \def\TrimBoundingBox#1{%
   \TrimTop{#1}%
   \TrimLeft{#1}%
   \TrimBottom{#1}%
   \TrimRight{#1}%
       }

 \def\TrimFigDims@{%
    \advance\Wd@ by -\LT@ 
    \advance\Wd@ by -\RT@ \RT@=\z@
    \advance\Ht@ by -\TT@ \TT@=\z@
    \advance\Ht@ by -\BT@ 
    }

  %
  \def\ForceWidth#1{\ForcedDim@true
       \ForcedDim@@#1\ForcedHeight@false}
  
  \def\ForceHeight#1{\ForcedDim@true
       \ForcedDim@@=#1\ForcedHeight@true}

  \def\ForceOn{\ForceOn@true}
  \def\ForceOff{\ForceOn@false\ForcedDim@false}
  
  \def\CalculateFigScale@{%
     \ifForcedDim@\FigScale=1000pt
           \ifForcedHeight@
                \Rescale\FigScale\ForcedDim@@\Ht@
           \else
                \Rescale\FigScale\ForcedDim@@\Wd@
           \fi
     \fi
     \Real{\FigScale}%
     \edef\FigSc@leReal{\the\Realtoks}%
     }
   
  \def\ScaleFigDims@{\TheScale=\FigScale
      \ifForcedDim@
           \ifForcedHeight@ \Ht@=\ForcedDim@@  \Scale\Wd@
           \else \Wd@=\ForcedDim@@ \Scale\Ht@
           \fi
      \else \Scale\Wd@\Scale\Ht@        
      \fi
      \ifForceOn@\relax\else\global\ForcedDim@false\fi
      \Scale\LT@\Scale\BT@  
      \Scale\XShift@\Scale\YShift@
      }
      
 \def\HideDisplacementBoxes{\global\def\FrameSpider##1{\null}}
 \def\ShowDisplacementBoxes{\global\def\FrameSpider##1{##1}}
 \let\HideFigureFrames\HideDisplacementBoxes 
 \let\ShowFigureFrames\ShowDisplacementBoxes
 \ShowDisplacementBoxes
 
 \def\hSlide#1{\advance\XSlide@ by #1}
 \def\vSlide#1{\advance\YSlide@ by #1}
 
  \def\SetInkShift@{%
            \advance\XShift@ by -\LT@
            \advance\XShift@ by \XSlide@
            \advance\YShift@ by -\BT@
            \advance\YShift@ by -\YSlide@
             }
  \def\InkShift@#1{\Shifted@{\Scrunched{#1}}}
 
   %
  \def\CleanRegisters@{%
      \globaldefs=1\relax
        \XShift@=\z@\YShift@=\z@\XSlide@=\z@\YSlide@=\z@
        \TT@=\z@\LT@=\z@\BT@=\z@\RT@=\z@
      \globaldefs=0\relax}

 
 \def\SetTexturesEPSFSpecial{\PSOriginfalse
  \gdef\EPSFSpecial##1##2{\relax
    \edef\specialtemp{##2}%
    \SPLIT@0.@\specialtemp.@\relax
    \special{illustration ##1 scaled
                        \the\Initialtoks@}}}
 
  \def\SetUnixCoopEPSFSpecial{\PSOrigintrue 
   \gdef\EPSFSpecial##1##2{%
      \dimen4=##2pt
      \divide\dimen4 by 1000\relax
      \Real{\dimen4}
      \edef\Aux@{\the\Realtoks}%
      \includegraphics{##1\space}}}

  \def\SetBechtolsheimEPSFSpecial@{
   \PSOrigintrue
   \special{\DriverTag@ Include0 "psfig.pro"}%
   \gdef\EPSFSpecial##1##2{%
      \dimen4=##2pt 
      \divide\dimen4 by 1000\relax
      \Real{\dimen4} 
      \edef\Aux@{\the\Realtoks}
      \special{\DriverTag@ Literal "10 10 0 0 10 10 startTexFig
           \the\mag\space 1000 div 
           dup 3.25 neg mul 1 index .25 neg mul translate 
           \Aux@\space mul dup scale "}%
      \special{\DriverTag@ Include1 "##1"}%
      \special{\DriverTag@ Literal "endTexFig "}%
        }}

  \def\SetBechtolsheimDVITPSEPSFSpecial{\def\DriverTag@{dvitps: }%
      \SetBechtolsheimEPSFSpecial@}

  \def\SetBechtolsheimDVI2PSEPSFSSpecial{\def\DriverTag@{DVI2PS: }%
      \SetBechtolsheimEPSFSpecial@}

  \def\SetLisEPSFSpecial{\PSOrigintrue 
   \gdef\EPSFSpecial##1##2{%
      \dimen4=##2pt
      \divide\dimen4 by 1000\relax
      \Real{\dimen4}
      \edef\Aux@{\the\Realtoks}%
      \special{pstext="10 10 0 0 10 10 startTexFig\space
           \the\mag\space 1000 div \Aux@\space mul 
           \the\mag\space 1000 div \Aux@\space mul scale"}%
      \includegraphics{##1}%
      \special{pstext=endTexFig}%
        }}

  \def\SetRokickiEPSFSpecial{\PSOrigintrue 
   \gdef\EPSFSpecial##1##2{%
      \dimen4=##2pt
      \divide\dimen4 by 10\relax
      \Real{\dimen4}
      \edef\Aux@{\the\Realtoks}%
      \includegraphics{##1}}}

  \def\SetInlineRokickiEPSFSpecial{\PSOrigintrue 
   \gdef\EPSFSpecial##1##2{%
      \dimen4=##2pt
      \divide\dimen4 by 1000\relax
      \Real{\dimen4}
      \edef\Aux@{\the\Realtoks}%
      \special{ps::[begin] 10 10 0 0 10 10 startTexFig\space
           \the\mag\space 1000 div \Aux@\space mul 
           \the\mag\space 1000 div \Aux@\space mul scale}%
      \special{ps: plotfile ##1}%
      \special{ps::[end] endTexFig}%
        }}

 \def\SetOzTeXEPSFSpecial{\PSOrigintrue
 \gdef\EPSFSpecial##1##2{%
 \dimen4=##2pt
 \divide\dimen4 by 1000\relax
 \Real{\dimen4}
 \edef\Aux@{\the\Realtoks}
 \special{epsf=\string"##1\string"\space scale=\Aux@}%
 }} 

  \def\SetPSprintEPSFSpecial{\PSOriginFALSE 
   \gdef\EPSFSpecial##1##2{
     \special{##1\space 
       ##2 1000 div \the\mag\space 1000 div mul
       ##2 1000 div \the\mag\space 1000 div mul scale
       \the\LLXtoks@\space neg \the\LLYtoks@\space neg translate
       }}}

 \def\SetArborEPSFSpecial{\PSOriginfalse 
   \gdef\EPSFSpecial##1##2{%
     \edef\specialthis{##2}%
     \SPLIT@0.@\specialthis.@\relax 
     \special{ps: epsfile ##1\space \the\Initialtoks@}}}

 \def\SetClarkEPSFSpecial{\PSOriginfalse 
   \gdef\EPSFSpecial##1##2{%
     \Rescale {\Wd@@}{##2pt}{1000pt}%
     \Rescale {\Ht@@}{##2pt}{1000pt}%
     \special{dvitops: import 
           ##1\space\the\Wd@@\space\the\Ht@@}}}

  \let\SetDVIPSONEEPSFSpecial\SetUnixCoopEPSFSpecial
  \let\SetDVIPSoneEPSFSpecial\SetUnixCoopEPSFSpecial

  \def\SetBeebeEPSFSpecial{
   \PSOriginfalse%
   \gdef\EPSFSpecial##1##2{\relax
    \special{language "PS",
      literal "##2 1000 div ##2 1000 div scale",
      position = "bottom left",
      include "##1"}}}
  \let\SetDVIALWEPSFSpecial\SetBeebeEPSFSpecial

  \def\SetNorthlakeEPSFSpecial{\PSOrigintrue
   \gdef\EPSFSpecial##1##2{%
     \edef\specialthis{##2}%
     \SPLIT@0.@\specialthis.@\relax 
     \special{insert ##1,magnification=\the\Initialtoks@}}}

 \def\SetStandardEPSFSpecial{%
   \gdef\EPSFSpecial##1##2{%
     \ms@g{}
     \ms@g{%
       !!! Sorry! There is still no standard for \string%
       \special\space EPSF integration !!!}%
     \ms@g{%
      --- So you will have to identify your driver using a command}%
     \ms@g{%
      --- of the form \string\Set...EPSFSpecial, in order to get}%
     \ms@g{%
      --- your graphics to print.  See BoxedEPS.doc.}%
     \ms@g{}
     \gdef\EPSFSpecial####1####2{}
     }}

  \SetStandardEPSFSpecial 
 
 \let\wlog\wlog@ld 

 \catcode`\:=\C@tColon
 \catcode`\;=\C@tSemicolon
 \catcode`\?=\C@tQmark
 \catcode`\!=\C@tEmark
 \catcode`\"=\C@tDqt

 \catcode`\@=\EPSFCatAt

 %
 %
 %
 %
 %

\SetEPSFDirectory{} 
\HideDisplacementBoxes
\SetRokickiEPSFSpecial  
%

%
%
\newtheorem{theorem}{Theorem}
\newtheorem{proposition}{Proposition}
\newtheorem{definition}{Definition}[section]
\newtheorem{lemma}{Lemma}
\newdefinition{example}{Example}
\newdefinition{remark}{Remark}
%
%
%
\input xy 
\xyoption{all} 
\xyoption{2cell} 
%
%
%
%
\DeclareMathAlphabet{\ams}{U}{msb}{m}{n}\DeclareMathAlphabet{\goth}{U}{euf}{m}{n}

\def\so{\text{SO}}\def\pso{\text{PSO}}\def\sl{S\kern-1pt L}\def\psl{\text{PSL}}
\def\pgl{\text{PGL}}\def\gl{\text{GL}}\def\endo{\text{End}}
\def\ml{M\kern-1pt L}\def\m{\text{M}}\def\d{\text{D}}\def\su{\text{SU}}
\def\sp{S\kern-1.5pt p}\def\f{\text{F}}\def\pu{\text{PU}}
\def\gal{\text{Gal}}\def\homeo{\text{Homeo}}\def\fix{\text{Fix}}
\def\hom{\text{Hom}}
\def\id{\text{id}}
\def\sym{\text{Sym}}\def\ob{\text{{\sf Ob}}}\def\ov{\overline}
\def\tl{\tilde}\def\wtl{\widetilde}\def\wh{\widehat}
\def\supp{\text{supp}\,}\def\rank{\text{rank}\,}\def\dom{\text{dom}}
\def\codim{\text{codim}\,}\def\rad{\text{Rad}\,}
\def\EE{\mathscr E}\def\NN{\mathscr N}
\def\II{\mathcal I}\def\JJ{\mathcal J}\def\BB{\mathcal B}
\def\CC{\mathcal C}\def\FF{\mathcal F}\def\AA{\mathcal A}
\def\OO{\mathcal O}\def\HH{\mathcal H}\def\RR{\mathcal R}
\def\LL{\mathcal L}\def\PP{\mathcal P}\def\QQ{\mathcal Q}
\def\TT{\mathcal T}\def\DD{\mathcal D}\def\cS{\mathcal S}
\def\gS{\goth{S}}\def\BBB{\goth{B}}\def\XXX{\goth{X}}
\def\g{\goth{g}}\def\h{\goth{h}}
\def\ve{\varepsilon}
\def\aa{\alpha}\def\ww{\omega}\def\bb{\beta}
\def\ss{\sigma}\def\vphi{\varphi}\def\ll{\lambda}
\def\ve{\varepsilon}\def\Om{\Omega}
\def\wh{\widehat}
\def\Z{\ams{Z}}\def\E{\ams{E}}
\def\H{\ams{H}}\def\R{\ams{R}}
\def\C{\ams{C}}\def\Q{\ams{Q}}
\def\F{\ams{F}}\def\K{\ams{K}}
\def\P{\ams{P}}\def\T{\ams{T}}
\def\O{\ams{O}}\def\G{\ams{G}}
\def\M{\ams{M}}
\def\0{\mathbf{0}}
\def\1{\mathbf{1}}
\def\ee{\mathbf{e}}\def\vv{\mathbf{v}}
\def\uu{\mathbf{u}}\def\xx{\mathbf{x}}
\def\quo{/\kern -.45em\sim}
\def\ds{\displaystyle}
\def\blob{\bullet}
\def\Langle{\langle\kern -2pt\langle}
\def\Rangle{\rangle\kern -1.9pt\rangle}
\def\lf{\lfloor}\def\rf{\rfloor}
\newpsobject{showgrid}{psgrid}{subgriddiv=1,griddots=10,gridlabels=6pt,gridcolor=red}
\def\yon{\text{Yon}}
\newcommand{\prsh}{\mathbf{PreSh}}
\newcommand{\op}{\text{op}}
\newcommand{\invlim}{\varprojlim}
\DeclareMathOperator{\Ker}{ker} 
\DeclareMathOperator{\im}{im} 
\DeclareMathOperator{\coker}{coker} 

\newcommand{\ra}{\rightarrow}
\newcommand{\rk}{rk}
\newcommand{\cork}[1]{|\kern0.75pt{#1}\kern1pt|}
\newcommand{\ZZ}{\mathbb{Z}}
\newcommand{\cN}{\mathcal{N}}
\newcommand{\ab}{\mathbf{Ab}}
\newcommand{\BP}{{\bf P}} 
\newcommand{\BPn}[1]{{\BP}^{{#1}}} 
\newcommand{\BPop}{{\bf P}^{op}} 
\newcommand{\BPg}[1]{{\bf P}_{>{#1}}} 
\newcommand{\BPge}[1]{{\bf P}_{\geq {#1}}} 
\newcommand{\BQ}{{\bf Q}}
\newcommand{\BQop}{{\bf Q}^{op}} 
\newcommand{\BR}{{\bf R}}  
\newcommand{\hs}{H\kern-0.5pt S}
\newcommand{\htt}{H\kern-0.5pt T}
\newcommand{\hc}{H\kern-0.5pt C}
\newcommand{\hst}{\widetilde{\hs}}
\newcommand{\psibar}{\overline{\psi}}
\newcommand{\betabar}{\overline{\beta}}
\newcommand{\geo}[1]{|{{#1}}|}



\journal{Journal of Algebra}

\begin{document}

\begin{frontmatter}



\title{Cellular cohomology of posets with local coefficients\tnoteref{label1}}
\tnotetext[label1]{Dedicated to the memory of Colin Maclachlan}


\author[bje]{Brent Everitt}
\ead{brent.everitt@york.ac.uk}
\author[prt]{Paul Turner}
\ead{prt.maths@gmail.com}
\address[bje]{Department of Mathematics, University of York, York
YO10 5DD, United Kingdom.}
\address[prt]{Section de math\'ematiques,  
Universit\'e de Gen\`eve, 2-4 rue du Li\`evre, CH-1211, Geneva,
Switzerland.}

\begin{abstract}
We describe a ``cellular'' approach to the computation of the cohomology
of a poset with coefficients in a presheaf. A cellular cochain complex
is constructed, described explicitly and shown to compute the
cohomology under certain circumstances. The descriptions are refined
further for certain classes of posets including the cell posets of
regular CW-complexes and geometric lattices. 
\end{abstract}




\end{frontmatter}

\section*{Introduction}

The cohomology groups of a poset $\BP$ with coefficients in a presheaf $F$ are
the derived functors (or higher limits) of the limit
$\invlim_{\BP} F$, and they can be computed using a canonical complex $S^*(\BP;F)$.
In \cite{EverittTurner3} we showed that 
the Khovanov homology groups of a link diagram are the higher limits of a
certain poset and presheaf.  The definition of Khovanov homology
 involves a different complex however, which, while based on essentially the same
poset and presheaf,  appears ad hoc in its
construction. The motivation for  \cite{EverittTurner3} was to place
it within a more general framework. The relationship between the two constructions is 
analogous to that found in topology, where the cellular
chain complex of a space is simple and  well suited to explicit
computation -- like the definition of Khovanov homology -- but has less flexibility 
than the singular chain complex. 

So we are naturally led to the following questions:
for what posets $\BP$ and presheaves $F$ can a ``cellular''
cohomology be defined? Under what circumstances will this  
compute the higher limits, that is to say, coincide with the usual cohomology?
In this paper we propose a definition of  cellular cohomology $\hc^*
(\BP; F)$  applicable to  a large class of posets, and show that for 
many naturally occurring examples, including the cell posets of
regular CW-complexes and geometric lattices, this cellular
cohomology computes the higher limits. 

Specifically, we define a cochain complex $C^* (\BP; F)$ for a
(graded) poset $\BP$  equipped with a presheaf $F$ 
by mimicking the construction of the cellular chain complex in
topology. We define first the
relative cohomology of pairs and then apply this to adjacent degrees of a filtration of $\BP$. 
The role of open cells is played by open intervals $\BPg x $.
Our main
result is that, as in topology, a vanishing
condition on these relative cohomologies suffices for this cellular complex
to compute the higher limits. If a poset has this condition we call it
\emph{cellular\/} and our main result (see Section \ref{section3})
then reads:

\begin{theorem}
Let $\BP$ be graded, cellular, locally finite with a corank function and let $F$ be a
presheaf on $\BP$. Then there is a natural isomorphism
$$
\hs^* (\BP; F)\cong \hc^* (\BP; F).
$$
\end{theorem}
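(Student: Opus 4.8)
The plan is to transport the classical comparison between cellular and singular cohomology into the poset setting, using the corank function to build a skeletal filtration and the cellular vanishing condition to run the standard diagram chase (equivalently, to force a spectral sequence to collapse). First I would use the corank to filter $\BP$ by the subposets $\BP^{p} = \{x\in\BP : \cork{x}\le p\}$; since $\BP$ is graded with a corank function this is an increasing filtration by appropriate subposets that exhausts $\BP$. By construction the cellular complex is $C^{p}(\BP;F) = \hs^{p}(\BP^{p}, \BP^{p-1}; F)$, with coboundary the composite $\hs^{p}(\BP^{p},\BP^{p-1};F) \to \hs^{p}(\BP^{p};F) \to \hs^{p+1}(\BP^{p+1}, \BP^{p};F)$ of the natural restriction with the connecting homomorphism of the pair $(\BP^{p+1},\BP^{p})$, exactly mirroring the topological definition. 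The target of the comparison is $\hs^{*}(\BP;F)$, the cohomology of the canonical complex $S^{*}(\BP;F)$ computing the higher limits.

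The engine of the proof is the long exact sequence in relative cohomology of a poset pair, established earlier, applied to the pairs $(\BP^{p}, \BP^{p-1})$ and the triples $(\BP^{p+1}, \BP^{p}, \BP^{p-1})$. The first thing I would prove, by induction on $p$, are the two stability statements familiar from topology: that $\hs^{n}(\BP^{p};F)$ vanishes for $n > p$, and that the restriction $\hs^{n}(\BP;F) \to \hs^{n}(\BP^{p};F)$ is an isomorphism for $n < p$ and a monomorphism for $n = p$. Both inductions are driven entirely by the long exact sequences together with the single-degree concentration of the relative groups $\hs^{n}(\BP^{p}, \BP^{p-1};F)$, and the exhaustiveness of the filtration (via local finiteness) is what lets the limit over $p$ recover $\hs^{n}(\BP;F)$.

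The concentration just invoked is exactly where the \emph{cellular} hypothesis enters, and it is the step I expect to be the main obstacle. I would show that the relative cohomology of the skeletal pair localizes, decomposing as a product over the corank-$p$ elements, $\hs^{n}(\BP^{p}, \BP^{p-1};F) \cong \prod_{\cork{x}=p} \hs^{n}(\BPge x, \BPg x; F)$ --- an excision-type statement asserting that attaching all cells of a given corank is cohomologically the independent union of attaching each closed interval $\BPge x$ along its boundary open interval $\BPg x$. The cellular condition then says precisely that each local factor vanishes except in degree $n = p$, which yields the concentration. Making this decomposition precise is the crux: one must check that the combinatorics of a graded, locally finite poset with a corank function genuinely separate the cells of a fixed corank, and that local finiteness controls the resulting (possibly infinite) product so that exactness is preserved.

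With concentration and the two stability facts in place, the remainder is the standard staircase argument. Splicing the long exact sequences of the pairs produces a diagram in which the cellular coboundary appears as a composite of an epimorphism followed by a monomorphism, and a routine chase identifies $\hc^{n}(\BP;F) = \ker(C^{n} \to C^{n+1})/\im(C^{n-1}\to C^{n})$ with $\hs^{n}(\BP^{n};F)$, and hence, via the stability isomorphism, with $\hs^{n}(\BP;F)$. Finally I would verify naturality: the filtration, the long exact sequences, the connecting and restriction maps, and the localization decomposition are all natural in the presheaf $F$, so the composite isomorphism $\hs^{*}(\BP;F) \cong \hc^{*}(\BP;F)$ is natural, as claimed.
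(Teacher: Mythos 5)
Your outline is the classical ``staircase'' comparison, and for \emph{finite} $\BP$ (or, more generally, bounded corank) it is sound and is essentially the paper's own first step in unrolled form: the paper filters $S^*(\BP;F)$ by the subcomplexes $S^*(\BP,\BP^p;F)$ and runs the resulting spectral sequence, whose collapse at $E_2$ is exactly your concentration-plus-chase. Your ``excision'' decomposition $\hs^n(\BP^p,\BP^{p-1};F)\cong\prod_{\cork{x}=p}\hs^n(\BPge{x},\BPg{x};F)$ is also correct and is proved in the paper at the chain level (Proposition \ref{lem:epsilonchain}); cohomology commutes with products of complexes, so local finiteness is not the issue there.

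The genuine gap is the passage from the skeleta to all of $\BP$ when the corank is unbounded, which the hypotheses allow (only the rank is assumed bounded above, so the filtration $\BP^0\subset\BP^1\subset\cdots$ may be infinite). Your stability claim that $\hs^n(\BP;F)\to\hs^n(\BP^p;F)$ is an isomorphism for $n<p$ amounts to $\hs^n(\BP,\BP^p;F)=0$ for $n\leq p$, and the long exact sequences of the triples $(\BP,\BP^{q+1},\BP^q)$ only show that the groups $\hs^n(\BP,\BP^q;F)$ \emph{stabilize} as $q$ grows; they do not show the stable value is zero. Because $S^n(\BP;F)$ is a \emph{product} over simplices, $S^*(\BP;F)$ is the inverse limit of the $S^*(\BP^q;F)$ (equivalently, the $S^*(\BP,\BP^q;F)$ form a decreasing family of subcomplexes with zero intersection), and cohomology does not commute with such limits without an additional completeness argument --- here a Milnor $\varprojlim^1$ sequence together with the Mittag--Leffler condition supplied by the stabilization. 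Your phrase ``the exhaustiveness of the filtration (via local finiteness) is what lets the limit over $p$ recover $\hs^n(\BP;F)$'' papers over exactly this point; the colimit argument that works for homology of CW-complexes does not transfer to these product-type cochain complexes. The paper explicitly declines to confront this convergence question: for locally finite $\BP$ it instead builds a projective resolution $B_*\to\Delta\Z$ with $B_n=\bigoplus_{\cork{x}=n}\Upsilon_xA_x$, verifies exactness pointwise by applying the finite case to the finite cellular posets $\BPge{x}$ (this is where local finiteness enters), and identifies $\hom_{\prsh(\BP)}(B_*,F)$ with $C^*(\BP;F)$. Either supply the $\varprojlim^1$ argument explicitly or reduce to finite closed intervals as the paper does; as written, the limiting step is unjustified.
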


The proof is in
two steps: finite posets are handled via a spectral sequence and this
is extended to locally finite posets using a projective resolution. On
the other hand it is not hard to find examples for which the higher
limits \emph{cannot\/} be computed 
cellularly.  

We also spend some time describing the cellular
cochain complex explicitly. For example in Section \ref{section2}
we describe the cochain groups as a product taken over the ``cells'': 

\begin{proposition}
Let $\BP$ be graded with a corank function and $F$ a presheaf on $\BP$.
Then there are natural isomorphisms
$$
C^n  (\BP; F)\cong \prod_{\cork{x} = n}\widetilde{\hs}^{n-1}(\BPg x ; \Delta F(x))  
$$
\end{proposition}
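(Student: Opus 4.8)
The plan is to identify $C^n(\BP;F)$ with $\hs^n$ of the $n$-th pair in the corank filtration and then to split the corresponding relative standard complex as a product indexed by the cells of corank $n$. Throughout, write $\BP^{(n)}$ for the subposet of elements of corank at most $n$, so that $C^n(\BP;F)=\hs^n(\BP^{(n)},\BP^{(n-1)};F)$ is, by definition, the cohomology of the relative standard complex, namely the kernel of the restriction $S^*(\BP^{(n)};F)\to S^*(\BP^{(n-1)};F)$. In degree $k$ this kernel is the product of the groups $F(x_0)$ taken over those strict chains $x_0<x_1<\cdots<x_k$ of $\BP^{(n)}$ that do not lie entirely in $\BP^{(n-1)}$, that is, over chains meeting the corank-$n$ level.

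First I would record the combinatorial observation that drives everything: a chain of $\BP^{(n)}$ fails to lie in $\BP^{(n-1)}$ precisely when it contains an element of corank $n$, and since corank strictly decreases as one ascends any chain, such an element is unique and is the least element $x_0$. Setting $x:=x_0$, the remaining entries $x_1<\cdots<x_k$ form an arbitrary, possibly empty, chain of the open interval $\BPg x$. This sets up a bijection between the chains indexing the relative complex and pairs $(x,\tau)$ with $\cork{x}=n$ and $\tau$ a chain of $\BPg x$, under which the coefficient $F(x_0)=F(x)$ is constant as $\tau$ varies. I would then verify that the relative coboundary respects this indexing: the face deleting $x_0$ produces a chain lying in $\BPg x\subseteq\BP^{(n-1)}$, hence zero in the relative complex, whereas every other face fixes the least element and so acts as the identity on $F(x)$. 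Grouping the factors by $x$, the relative complex is therefore the product over $\cork{x}=n$ of the augmented (reduced) standard complex of $\BPg x$ with constant coefficient presheaf $\Delta F(x)$, shifted up by one, the single length-zero chain $\{x\}$ playing the role of the $(-1)$-simplex that carries the augmentation.

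Finally, since cohomology commutes with products of complexes of abelian groups, passing to $\hs^n$ term by term yields
$$
C^n(\BP;F)\cong\prod_{\cork{x}=n}\widetilde{\hs}^{\,n-1}(\BPg x;\Delta F(x)),
$$
the shift by one converting $\hs^n$ of the pair into reduced cohomology in degree $n-1$ of each open interval. Naturality in $F$ is automatic, since every arrow above is assembled from the presheaf restrictions and the structure maps of the standard complexes. I expect the principal obstacle to be the middle step: checking precisely that the relative differential reproduces the augmented coboundary of $\BPg x$ — in particular that it is the deleted-minimum face, and only that face, which dies in the relative complex, so that the augmentation and the degree shift come out exactly right.
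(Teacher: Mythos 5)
Your argument is correct and reaches the result by a genuinely more direct route than the paper. The paper proceeds in three stages: first it splits the relative complex $S^*(\BPn{n},\BPn{n-1};F)$ as a product of the relative complexes $S^*(\BPge{x},\BPg{x};F)$ over the corank-$n$ elements (Proposition \ref{lem:epsilonchain}); then it replaces $F$ by the constant presheaf $\Delta F(x)$ on each closed interval via the natural transformation $\kappa_y=F^y_x$; and finally it converts $\hs^n(\BPge{x},\BPg{x};\Delta F(x))$ into $\widetilde{\hs}^{n-1}(\BPg{x};\Delta F(x))$ using the long exact sequence of the pair together with the contractibility of $\geo{N^*\BPge{x}}$ (Proposition \ref{section2:subsection2:proposition2}). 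You collapse all three steps into a single chain-level identification of the relative complex with the product, over the corank-$n$ elements, of the augmented complexes of the open intervals shifted by one; this is cleaner and makes the degree shift and the role of the augmentation completely transparent. The one point you must attend to is that the paper's relative complex is built from the full nerve, degenerate simplices included, whereas your indexing by \emph{strict} chains describes the non-degenerate complex $T^*(\BPn{n},\BPn{n-1};F)$. For a degenerate simplex the corank-$n$ entry can repeat, so your bijection with pairs $(x,\tau)$, $\tau$ a chain of $\BPg{x}$, and hence the direct product decomposition into shifted reduced complexes, holds only for $T^*$; applied verbatim to $S^*$ one lands instead on the relative complexes of the pairs $(\BPge{x},\BPg{x})$, which is essentially the paper's intermediate stage. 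This is repaired in one sentence, since the paper establishes the homotopy equivalence $T^*(\BP,\BQ;F)\simeq S^*(\BP,\BQ;F)$ in \S\ref{section1:subsection3}, but it needs to be said. What the paper's longer route buys in exchange is a family of explicit isomorphisms compatible with the $S^*$-complex throughout, which it uses afterwards to derive the formula (\ref{eq:5}) for the cellular differential.
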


Our most concrete result concerning the complex itself is Proposition
\ref{prop:cellularcomplex.v4} where we show that the form of the cellular
cochain groups is shaped by abelian groups that reflect the
structure of closed intervals $\BPge{x}$. This also makes it easy to see that
the cochain groups need not necessarily be free, even if the presheaf takes values
that are free.

Section \ref{section4} describes in some detail
the cellular complex in a couple of important cases: when $\BP$ is the cell poset of a regular
CW-complex (this includes the Khovanov homology result mentioned
above) and when $\BP$ is a geometric lattice. 

\paragraph{} \emph{Colin Maclachlan\/}, colleague, mentor and friend, died in
November 2012. Colin maintained a healthy skepticism of ``abstract mumbo
jumbo'', so we're not sure that he would have approved of
this paper. Nevertheless, we dedicate it to him with much respect and
affection.

\section{Cohomology of posets with
  coefficients in a presheaf}\label{section1}

We start by recalling definitions and elementary
results concerning the cohomology of posets with coefficients in a
presheaf. Everything is well-known (see
\cite[Appendix II.3]{Gabriel-Zisman67})
but it is convenient to have it to
hand and set down the notation we will be using. Many of the
definitions and results carry straight through to the more general setting of small
categories, but as we will be working with posets we prefer to state
everything in these more concrete terms. 

\subsection{Definitions}
\label{section1:subsection1}

Let $\BP=(\BP,\leq)$ be a poset.
We will usually think of $\BP$ as a category 
having objects the elements of
the poset and with a unique morphism $x\ra y$ if and only if $x\leq
y$. The nerve $N^*\BP$ of $\BP$ is the simplicial
set with $n$-simplicies
$N^n\BP$ the poset sequences $\sigma =\sigma_n\leq
\cdots \leq \sigma_0$, where the $\sigma_i\in \BP$,
and with face maps $d_i:N^n\BP\rightarrow N^{n-1}\BP$ and
degeneracy maps $s_i:N^{n}\BP\rightarrow N^{n+1}\BP$
given by
\begin{equation}
  \label{eq:23}
d_i\sigma  = \sigma_n\leq\cdots \leq \widehat{\sigma}_i \leq \cdots \leq \sigma_0
\text{ and }
s_i\sigma =  \sigma_n\leq\cdots \leq
\sigma_i
\leq \sigma_i \leq \cdots \leq \sigma_0.  
\end{equation}
The geometrical realization of $N^*\BP$ will be denoted by $\geo {N^*\BP}$. It is a
CW-complex  with a single $n$-cell
for each non-degenerate $n$-simplex $\ss=\ss_n<\cdots<\ss_0$. 

A presheaf on $\BP$ is a (covariant) functor 
$F:\BP^\op\rightarrow\ab$ to abelian groups (or, more generally, to
$R$-modules; again, we specialize for concreteness). 
The category
$\prsh(\BP)$ has as objects the presheaves
$F:\BP^\op\rightarrow\ab$ and as morphisms the natural
transformations $\kappa:F\rightarrow G$. We write 
$F^y_x$ for the homomorphism $F(y)\ra F(x)$ induced by $x\leq y$ in $\BP$,
and $\kappa_x$ for the map $F(x)\rightarrow G(x)$
that is the component at $x$ of the natural transformation $\kappa$.

\begin{example}
For $A\in\ab$ the \emph{constant\/} presheaf $\Delta A$ is
defined by $\Delta A(x)=A$ for every $x\in\BP$ and 
$(\Delta A)^y_x=1$ for every $x\leq y$ in $\BP$. 
\end{example}

\begin{example}
For $A\in\ab$ and $x\in\BP$ the \emph{Yoneda\/}  presheaf
$\Upsilon_xA$ is defined by 
$$
\Upsilon_xA(y)=
\left\{
\begin{array}{ll}
A,&\text{if }y\leq x\\
0,&\text{otherwise},
\end{array}
\right.
$$
and with $(\Upsilon_xA)_y^z=1$ when $y\leq z\leq x$; or $0\ra A$ when
$y\leq x$ and $z\not\leq x$, and the zero
map otherwise. Thus
$\Upsilon_xA$ is the constant presheaf $\Delta A$ on the closed interval $\BP_{\leq
  x}=\{y\in\BP\,|\,y\leq x\}$ and the zero presheaf 
on the rest of $\BP$. If $x\leq y$ in $\BP$ and $A\ra B$ is a map of abelian
groups, then there is an induced morphism of presheaves
$\Upsilon_xA\ra\Upsilon_yB$. 
Indeed, the most useful property of the Yoneda functor $\Upsilon_x$ is
that is it left adjoint to the evaluation functor 
 $\prsh(\BP) \ra \ab$ taking $F$ to $F(x)$. Explicitly, this adjunction is 
$\hom_{\prsh(\BP)}(\Upsilon_xA,F)\cong\hom_\Z(A,F(x))$, via
$\kappa\mapsto\kappa_x$, and if $\zeta:\Upsilon_xA\ra\Upsilon_yB$ is the
induced morphism above, then $\hom_{\prsh(\BP)}(\relbar,F)$
  applied to $\Upsilon_xA\ra\Upsilon_yB$ is the map
\begin{equation}
  \label{eq:21}
\hom_\Z(B,F(y))\stackrel{\zeta^*}{\ra}\hom_\Z(A,F(x))  
\text{ with }
\kappa_y\mapsto
F^y_x\kappa_y\zeta_x.
\end{equation}
In particular $\Upsilon_xA$
is projective in $\prsh(\BP)$ if and only if $A$ is projective in
$\ab$ (i.e.: $A$ is free). 
\end{example}

For any presheaf $F$ the limit $\invlim_\BP F$ (or group of
  global sections) exists in $\ab$ 
and is constructed by taking the subgroup of the product
$\prod_{x\in\BP}F(x)$ consisting of those $\BP$-tuples
$(a_x)_{x\in\BP}$ with $a_x\in F(x)$, and such that for all $x\leq y$ in $\BP$
the induced map $F(y)\rightarrow F(x)$ sends
$a_y$ to $a_x$. 
Indeed we have a left
exact functor $\invlim_\BP:\prsh(\BP)\rightarrow\ab$ and the right derived
functors 
$$
\textstyle{\invlim_\BP^i}:=R^i \invlim_\BP
$$
are called
the higher limits of $F$. By definition the \emph{cohomology groups of $\BP$ with
coefficients in the presheaf $F$} are these higher limits evaluated at
$F$.

The higher limits are computed as follows. 
There is a canonical projective resolution $A_*\ra \Delta\Z$ in
$\prsh(\BP)$ with $A_n=\bigoplus_\ss
\Upsilon_{\ss_n}\Z$, 
where the direct sum is over the simplicies $\ss\in N^n\BP$, and with
the maps $A_n\ra A_{n-1}$ induced by the simplicial structure of the
nerve  (see, for example, \cite[Proposition II.6.1]{Moerdijk95}). The associated
cochain 
complex $S^*(\BP;F):=\hom_{\prsh(\BP)}(A_*,F)$ thus computes the
higher limits. It has $n$th cochain group
$$
S^n(\BP; F)= \prod_{\sigma} F(\sigma_n),
$$
where the product is over the $n$-simplicies $\ss\in N^n\BP$ of the nerve. We
adopt the convention that $S^*(\varnothing;F)$
is the zero complex. If $s\in  S^n(\BP; F)$ and $\sigma\in
N^n\BP $ we write $s\cdot\sigma$ for the component of $s$
indexed by $\sigma$, so if $\sigma= \sigma_n\leq 
\cdots \leq \sigma_0 $ then $s\cdot \sigma \in F(\sigma_n)$.

The differential $d\colon S^{n-1}(\BP; F)\ra S^n(\BP; F)$ is defined
for $s\in S^{n-1}(\BP; F)$  and $\sigma \in N^n\BP$ by 
\begin{equation}
  \label{eq:1}
  ds\cdot \sigma = \sum_{i=0}^{n-1} (-1)^i s\cdot d_i\sigma + 
(-1)^n F_{\sigma_n}^{\sigma_{n-1}}  (s\cdot d_n\sigma)
\end{equation}
with the $d_i$ the face maps (\ref{eq:23}) of the nerve. 
By defining 
$$
\hs^*(\BP;F):= H(S^*(\BP; F),d) 
$$
we have $ \textstyle{\invlim_\BP^*}F\cong\hs^*(\BP;F) $.

With a constant presheaf we recover the
topology:
\begin{equation}
  \label{eq:26}
\hs^*(\BP;\Delta A) \cong H^*(\geo{N^*\BP},A)  
\end{equation}
where the right hand side is the ordinary singular cohomology of the
geometrical realisation $\geo{N^*\BP}$ (see for example
\cite[Theorem 2.1]{Baclawski75}).

The complex $S^*(\BP;F)$ has a factor for each simplex in the nerve,
degenerate or not. There is a version which uses only non-degenerate
simplices:  let $T^n(\BP;F)=\prod_{\ss}F(\ss_n)$, taking the product over $N^n_\circ\BP$, 
the non-degenerate simplicies $\ss=\ss_n<\cdots <\ss_n$,
and the differential also given by (\ref{eq:1}). 
Then $T^*(\BP;F)$ is a quotient of $S^*(\BP;F)$ by the
  subcomplex consisting of the degenerate simplicies, itself homotopy
  equivalent to the zero complex, hence $T^*(\BP;F)$ and $S^*(\BP;F)$
  are homotopy equivalent.
Explicitly, define
$f:S^*(\BP;F)\ra T^*(\BP;F)$ by $fs\cdot\ss=s\cdot\ss$ for
$\ss\in N^n_\circ\BP$, and
$g:T^*(\BP;F)\ra S^*(\BP;F)$ by 
$$
gt\cdot\ss
=
\left\{
\begin{array}{ll}
  t\cdot\ss,&\text{if }\ss\in N^n_\circ\BP,\\
  0,&\text{otherwise.}
\end{array}
\right.
$$
Then $fg$ is the identity on $T^*(\BP;F)$. Let $h:S^n (\BP;F)\ra S^{n-1}(\BP;F)$ be
given by
$$
hs\cdot\ss=
\left\{
\begin{array}{ll}
  (-1)^p s\cdot s_p\ss,&\text{if
  }\ss=\ss_{n-1}\cdots\ss_{p+\ell}\ss_p\cdots\ss_p\ss_{p-1}\cdots\ss_0,\\ 
 0,&\text{otherwise},
\end{array}\right.
$$
where the $\ss_{p},\ldots,\ss_0$ are distinct, there are $\ell\geq 2$ with
$\ell$ \emph{even\/} repeats of $\ss_p\not=\ss_{p+\ell}$, and $s_p \colon N^{n-1}\BP\ra
N^n\BP$ is a  degeneracy map from (\ref{eq:23}). 
Then $h$ is a chain homotopy between $gf$ and the identity on $S^*(\BP;F)$.
Much of what we say about $S^*(\BP;F)$ holds
analogously for $T^*(\BP;F)$. We will content ourselves with pointing this
out where appropriate and leaving the details to the reader.

\subsection{Induced maps}
\label{section1:subsection2}

If $f\colon \BQ \ra\BP$ is a map of posets then there
are a number of induced maps and functors.  
\begin{description}
\item[\textendash] There is an induced map of simplicial sets
$N^*\BQ\ra N^*\BP$ sending $\ss=\ss_n\leq\cdots\leq\ss_0\in N^n\BQ$ to
$f\ss=f\ss_n\leq\cdots\leq f\ss_0\in N^n\BP$. 
\item[\textendash]  There is
an induced functor $\prsh(\BP)\ra\prsh(\BQ)$ sending $F\in\prsh(\BP)$
to $f^*F:=F\circ f$ and $\kappa:F\ra G$ to $f^*\kappa:f^*F\ra f^*G$ with
$f^*\kappa_x=\kappa_{fx}$. If $f$ is an inclusion
$\BQ\hookrightarrow\BP$ then we will just write $F$ for $f^*F$.
\item[\textendash] There is an induced map of groups $f^*\colon S^*(\BP;F)\ra S^*(\BQ;f^*F)$,
the \emph{pull-back}, defined for $s\in S^n(\BP;F)$ and $\ss\in N^n\BQ$ by 
\begin{equation}
  \label{eq:8}
  f^*s\cdot \ss = s\cdot f\ss.
\end{equation}
\end{description}

\begin{lemma}\label{section1:subsection1:lemma1} 
The pull-back $f^*$ is a chain map. If $g\colon \BR \ra \BQ$ is another poset map then 
$(fg)^* = g^*f^*: S^*(\BP,F)\ra S^*(\BR,(fg)^*F)$.
\end{lemma}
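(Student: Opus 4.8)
The plan is to verify both assertions by direct computation from the definitions, the whole point being that the pull-back is defined simplex-wise and the differential is built out of the face maps together with a single presheaf transition map. Once the bookkeeping is set up, everything reduces to term-by-term matching.

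For the chain-map claim I would fix $s\in S^{n-1}(\BP;F)$ and a simplex $\tau=\tau_n\leq\cdots\leq\tau_0\in N^n\BQ$, and expand the two sides of $f^*(ds)\cdot\tau = d(f^*s)\cdot\tau$ using (\ref{eq:1}) and (\ref{eq:8}). On the left, $f^*(ds)\cdot\tau = ds\cdot f\tau$, and applying the differential on $\BP$ to the simplex $f\tau = f\tau_n\leq\cdots\leq f\tau_0$ produces the alternating sum of the $s\cdot d_i(f\tau)$ together with the boundary term $F^{f\tau_{n-1}}_{f\tau_n}(s\cdot d_n(f\tau))$. On the right, the differential on $\BQ$ with coefficients in $f^*F$ produces the alternating sum of $(f^*s)\cdot d_i\tau = s\cdot f(d_i\tau)$ together with $(f^*F)^{\tau_{n-1}}_{\tau_n}\bigl((f^*s)\cdot d_n\tau\bigr)$.

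The two expressions agree once I record two compatibilities. First, because the induced map $N^*\BQ\ra N^*\BP$ is a map of simplicial sets it commutes with the face operators, so $f(d_i\tau)=d_i(f\tau)$ for every $i$; this matches the alternating sums term by term. Second, since $f^*F = F\circ f$, the transition map of $f^*F$ attached to $\tau_n\leq\tau_{n-1}$ is exactly $(f^*F)^{\tau_{n-1}}_{\tau_n}=F^{f\tau_{n-1}}_{f\tau_n}$, which matches the boundary terms. Hence $f^*d=df^*$.

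The functoriality statement is then a one-line unwinding. Because $g^*f^*F=(f^*F)\circ g=F\circ(fg)=(fg)^*F$ the two composites have the same target, and for $s\in S^n(\BP;F)$ and $\rho\in N^n\BR$ both $(fg)^*s\cdot\rho$ and $g^*(f^*s)\cdot\rho$ equal $s\cdot f(g\rho)$, using $(fg)\rho=f(g\rho)$ for the induced simplicial maps. The only place requiring any care is the boundary term in the chain-map computation: one must check that pulling $F$ back along $f$ really replaces $F^{\tau_{n-1}}_{\tau_n}$ by $F^{f\tau_{n-1}}_{f\tau_n}$ rather than by some other transition map. This is immediate from $f^*F=F\circ f$, but it is the single point at which the coefficients, as opposed to the purely simplicial data, enter, and so it is the step I would flag as the substantive one.
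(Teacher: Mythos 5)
Your computation is correct and is exactly the routine verification the paper has in mind (the authors state this lemma without proof). Both the commutation of the induced simplicial map with the face operators and the identification $(f^*F)^{\tau_{n-1}}_{\tau_n}=F^{f\tau_{n-1}}_{f\tau_n}$ are the right points to isolate, and the functoriality claim follows from $(fg)\rho=f(g\rho)$ as you say.
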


If $f:\BQ\rightarrow\BP$ is injective then (\ref{eq:8})
gives
a pull-back $T^*(\BP;F)\ra T^*(\BQ;f^*F)$ and the analogue of Lemma
\ref{section1:subsection1:lemma1}  holds. 

\begin{description}
\item[\textendash] If $f\colon \BQ\ra\BP$ is finite-to-one, i.e. for each $x\in\BP$ the
pre-image $f^{-1}x$ is a finite set, then there is an induced map of groups
$f_*\colon S^*(\BQ;f^*F) \ra S^*(\BP;F)$, the \emph{push-forward}, defined
for $s\in S^n(\BQ;f^*F)$ and $\sigma\in N^n\BP$ by
\begin{equation}
  \label{eq:14}
f_*s\cdot \sigma= \sum_{\tau\in f^{-1}\sigma} s\cdot \tau  
\end{equation}
If $f^{-1}\sigma$ is empty the right-hand
side is taken to be zero. By definition of the presheaf
$f^*F$ each element of $f^{-1}\sigma$ has associated to it the same
abelian group -- namely $F(\sigma_n)$ -- and the sum in (\ref{eq:14}) takes place in
this group. The
push-forward is not in general a chain map: for example if $f:\BQ\ra\BP$ is
injective but not surjective.
\end{description}

\begin{remark}
Our notation differs from that found in
\cite[Appendix 2]{Gabriel-Zisman67} where $f_*$ is used for the
induced functor $\prsh(\BP)\ra\prsh(\BQ)$ and $f^*$ denotes a left adjoint to $f_*$. 
\end{remark}

\begin{lemma}\label{section1:subsection1:lemma2}
If $f$ is injective then $f^*f_* = \text{id}$, so that
$f^*$ is surjective and $f_*$ is injective.   
\end{lemma}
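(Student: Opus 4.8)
The plan is to compute $f^*f_*$ directly on cochains and reduce everything to the single observation that an injective poset map induces an injection on the simplices of the nerve. First I would note that since $f$ is injective it is in particular finite-to-one (each fibre $f^{-1}x$ has at most one element), so the push-forward $f_*$ of (\ref{eq:14}) is defined; since the pull-back runs $f^*\colon S^*(\BP;F)\ra S^*(\BQ;f^*F)$ and the push-forward runs $f_*\colon S^*(\BQ;f^*F)\ra S^*(\BP;F)$, the composite $f^*f_*$ makes sense as an endomorphism of $S^*(\BQ;f^*F)$.

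Next I would take an arbitrary cochain $s\in S^n(\BQ;f^*F)$ and a simplex $\ss\in N^n\BQ$ and unwind the two definitions. Combining the pull-back formula (\ref{eq:8}) with the push-forward formula (\ref{eq:14}) gives
\begin{equation*}
(f^*f_*s)\cdot\ss = (f_*s)\cdot f\ss = \sum_{\tau\in f^{-1}(f\ss)} s\cdot\tau,
\end{equation*}
where $f^{-1}(f\ss)$ denotes the set of simplices of $N^n\BQ$ sent to $f\ss$ by the induced simplicial map.

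The key step, where injectivity of $f$ is genuinely used, is to identify this fibre: I claim $f^{-1}(f\ss)=\{\ss\}$. Indeed, a simplex $\tau=\tau_n\leq\cdots\leq\tau_0$ maps to $f\tau=f\tau_n\leq\cdots\leq f\tau_0$, and if $f\tau=f\ss$ then $f\tau_i=f\ss_i$ for every $i$; since $f$ is injective on elements this forces $\tau_i=\ss_i$, that is $\tau=\ss$. Hence the sum collapses to the single term $s\cdot\ss$, giving $(f^*f_*s)\cdot\ss=s\cdot\ss$ for every $\ss$, and therefore $f^*f_*=\text{id}$.

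Finally, the two stated consequences are purely formal: a map possessing a right inverse is surjective and a map possessing a left inverse is injective, so from $f^*f_*=\text{id}$ we read off that $f^*$ is surjective and $f_*$ is injective. I do not expect any real obstacle here; the only point requiring the slightest care is the fibre computation above, and everything surrounding it is bookkeeping with the definitions (\ref{eq:8}) and (\ref{eq:14}).
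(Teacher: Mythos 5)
Your proof is correct and is exactly the argument the paper intends: the lemma is stated without proof precisely because it reduces to the observation that an injective poset map induces an injective map on nerves, so the fibre sum in (\ref{eq:14}) collapses to the single term $s\cdot\ss$. The only point worth checking, which you handle, is that injectivity guarantees $f$ is finite-to-one so that $f_*$ is defined at all.
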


\begin{description}
\item[\textendash]
Morphisms of presheaves also induce maps of complexes.
Let $F$ and $G$ be presheaves on $\BP$ and $\kappa\colon F\ra
G$ a natural transformation. 
Then there is an induced map 
$\kappa_* \colon S^*(\BP;F) \ra S^*(\BP;G)$
defined for $s\in S^n(\BP;F)$ and 
$\sigma =\sigma_n\leq \cdots \leq \sigma_0 \in N^n\BP$ by
$$
\kappa_*s \cdot \sigma = \kappa_{\sigma_n} (s\cdot \sigma).
$$
\end{description}

\begin{lemma}\label{section1:subsection1:lemma4} 
The induced map $\kappa_*$ is a chain map and if $f\colon \BQ\ra \BP$ 
is a poset map then the following diagram commutes:
$$
\xymatrix{
S^*(\BP;F) 
\ar[r]^{\kappa_*}  \ar[d]_{f^*} & S^*(\BP;G) \ar[d]^{f^*}  \\
S^*(\BQ;f^*F)  
\ar[r]_{f^*\kappa_*}   &S^*(\BQ;f^*G) 
}
$$
\end{lemma}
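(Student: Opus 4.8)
The plan is to verify both assertions by direct computation on components, since $\kappa_*$, $f^*$ and the differential $d$ are all defined simplex-by-simplex on the nerve. The only structural ingredient is the naturality of $\kappa$; everything else is bookkeeping with the face maps, so I expect no serious obstacle beyond keeping the indexing straight.

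For the chain-map claim I would fix $s\in S^{n-1}(\BP;F)$ and a simplex $\sigma=\sigma_n\leq\cdots\leq\sigma_0\in N^n\BP$, and expand both $d(\kappa_* s)\cdot\sigma$ and $\kappa_*(ds)\cdot\sigma$ using the formula (\ref{eq:1}). The key observation is that for $i<n$ the face $d_i\sigma$ retains $\sigma_n$ as its bottom vertex, whereas $d_n\sigma$ has bottom vertex $\sigma_{n-1}$. Hence in $d(\kappa_* s)\cdot\sigma$ the first $n$ terms read $(-1)^i\kappa_{\sigma_n}(s\cdot d_i\sigma)$, while the final term carries $(-1)^n G^{\sigma_{n-1}}_{\sigma_n}\bigl(\kappa_{\sigma_{n-1}}(s\cdot d_n\sigma)\bigr)$. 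Pulling the additive homomorphism $\kappa_{\sigma_n}$ through the finite sum, the two expressions agree on the first $n$ terms; the final terms then match precisely when $\kappa_{\sigma_n}\circ F^{\sigma_{n-1}}_{\sigma_n}=G^{\sigma_{n-1}}_{\sigma_n}\circ\kappa_{\sigma_{n-1}}$. This last identity is exactly the naturality square for $\kappa$ applied to the relation $\sigma_n\leq\sigma_{n-1}$, so the two sides coincide and $\kappa_*$ commutes with $d$.

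For the commuting square I would first record that the bottom arrow is $(f^*\kappa)_*$, the map induced by the pulled-back natural transformation $f^*\kappa\colon f^*F\ra f^*G$ whose component at $x$ is $\kappa_{fx}$. I then evaluate both composites on $s\in S^n(\BP;F)$ at a simplex $\tau\in N^n\BQ$. Going right-then-down gives $f^*(\kappa_* s)\cdot\tau=(\kappa_* s)\cdot f\tau=\kappa_{f\tau_n}(s\cdot f\tau)$, using that $f\tau$ has bottom vertex $f\tau_n$. Going down-then-right gives $(f^*\kappa)_*(f^* s)\cdot\tau=\kappa_{f\tau_n}\bigl((f^*s)\cdot\tau\bigr)=\kappa_{f\tau_n}(s\cdot f\tau)$. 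The two are visibly identical, so the square commutes.

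The one point that deserves attention is that among the faces $d_0\sigma,\ldots,d_n\sigma$ only $d_n\sigma$ alters the bottom vertex, passing from $\sigma_n$ to $\sigma_{n-1}$; this is precisely why the transport map $F^{\sigma_{n-1}}_{\sigma_n}$ and the naturality of $\kappa$ enter the chain-map verification, and why naturality alone suffices. The commuting square, by contrast, requires nothing beyond unwinding the definitions of $f^*$ and $\kappa_*$ together with the identity $(f^*\kappa)_x=\kappa_{fx}$.
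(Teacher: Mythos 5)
Your proof is correct and is exactly the routine componentwise verification that the paper leaves to the reader (no proof of this lemma appears in the text): the first $n$ faces preserve the bottom vertex $\sigma_n$ so $\kappa_{\sigma_n}$ passes through them, and the $d_n$ term reduces to the naturality square $\kappa_{\sigma_n}\circ F^{\sigma_{n-1}}_{\sigma_n}=G^{\sigma_{n-1}}_{\sigma_n}\circ\kappa_{\sigma_{n-1}}$. Your reading of the bottom arrow as $(f^*\kappa)_*$ with $(f^*\kappa)_x=\kappa_{fx}$ matches the paper's definition of the induced functor, and the square then commutes by unwinding definitions as you say.
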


\subsection{Reduced cohomology}
\label{section1:subsection4}

For $A\in\ab$ we can augment $S^*(\BP;\Delta A)$ in degree $-1$ by defining
$d^{-1}\colon A\ra
S^0(\BP;\Delta A)$ to be $d^{-1}a\cdot\ss=a$ for $\ss\in N^0\BP$
(i.e. $d^{-1}$ injects $A$ diagonally). Then
$\widetilde{S}^*(\BP;\Delta A):=A\ra S^*(\BP;\Delta A)$
is a cochain complex, and the \emph{reduced\/} cohomology is defined by
$$
\widetilde{\hs}^*(\BP;\Delta A):=
H (\widetilde{S}^*(\BP;\Delta A),d).
$$
This is isomorphic to $\widetilde{H}^*(\geo{N^*\BP};A)$,
the ordinary reduced cohomology of the realization of the nerve.
Sometimes it will be convenient to set
$N^{-1}\BP=N^{-1}_\circ\BP=\star$, the one element set, and define $a\cdot\star=a$
for $a\in A$.

An alternative construction is as follows: let
$\star$ be the one-element poset and let 
$F\in\prsh(\star)$ be the presheaf with $F(\star) = A$.  The collapse map
$f\colon \BP \ra \star$ induces the constant presheaf $\Delta A$ on
$\BP$ via $f^*F = \Delta A$. By considering the map 
$f^*\colon \hs^*(\star;F) \ra \hs^*(\BP;\Delta A)$ induced by the
pullback, we have
$$
\widetilde{\hs}^* (\BP;\Delta A) \cong \coker f^*.
$$
Analogously we can define $\widetilde{T}^*(\BP;\Delta A)$ and $\widetilde{\htt}^*(\BP;\Delta A)$
-- although only the first of the two approaches above now works -- and we
have a homotopy
equivalence $\widetilde{T}^*(\BP;\Delta
A)\simeq\widetilde{S}^*(\BP;\Delta A)$.

\subsection{Relative cohomology}
\label{section1:subsection3}

Let $f\colon \BQ\ra\BP$ be a
poset map, $F$ a presheaf on $\BP$ and $f^*\colon  S^*(\BP;F)
\ra S^*(\BQ;f^*F)$ the pull-back. Define
$$
S^*(\BP,\BQ;F):= \Ker f^*.
$$ 
We will mostly consider the case where $f$ is an inclusion, which is why we omit it
from the notation. Observe that for $s\in S^*(\BP;F)$ we have  $s\in
S^*(\BP,\BQ;F)$ if and only if  $s\cdot\ss=0$ for all
$\ss\in f(N^*\BQ)\subset N^*\BP$.
The differential on $S^*(\BP;F)$ restricts to a differential on $S^*(\BP,\BQ;F)$ and we define
$$
\hs^*(\BP,\BQ;F) := H(S^*(\BP,\BQ;F), d),
$$
the (\emph{relative}) cohomology of the pair $(\BP,\BQ)$ with coefficients in the
presheaf $F$. We adopt the convention that  $S^*(\BP,\varnothing;F)=
S^*(\BP;F)$. If $f$ is an injection we can
analogously define $T^*(\BP,\BQ;F)$. The maps given at the end of
\S\ref{section1:subsection1} then restrict to the various relative complexes to give a homotopy
equivalence $T^*(\BP,\BQ;F)\simeq S^*(\BP,\BQ;F)$.

If $f$ is injective then Lemma \ref{section1:subsection1:lemma2} gives
a short exact sequence 
$$
0\longrightarrow 
S^*(\BP,\BQ;F)
\longrightarrow
S^*(\BP;F)
\stackrel{f^*}{\longrightarrow}
S^*(\BQ;f^*F)
\longrightarrow
0
$$
and hence a long exact sequence
\begin{equation}
  \label{eq:6}
\cdots
\stackrel{\beta}{\longrightarrow}
\hs^n(\BP,\BQ;F)
\longrightarrow
\hs^n(\BP;F)
\longrightarrow
\hs^n(\BQ;f^*F)
\stackrel{\beta}{\longrightarrow}
\hs^{n+1}(\BP,\BQ;F)
\longrightarrow
\cdots  
\end{equation}

\begin{lemma}\label{lem:pqr}
Let  $\BP, \BQ , \BR$ be posets with $j\colon
\BR\hookrightarrow \BQ$ and $i\colon\BQ\hookrightarrow\BP$ inclusions
and 
let $F$ be a presheaf on $\BP$.  Then there is a short exact
sequence
$$
0\longrightarrow 
S^*(\BP,\BQ;F)
\longrightarrow
S^*(\BP,\BR;F)
\stackrel{i^*}{\longrightarrow}
S^*(\BQ,\BR;F)
\longrightarrow
0
$$
and hence a long exact sequence
$$
\cdots
\stackrel{\delta}{\longrightarrow}
\hs^n(\BP,\BQ;F)
\longrightarrow
\hs^n(\BP,\BR;F)
\longrightarrow
\hs^n(\BQ,\BR;F)
\stackrel{\delta}{\longrightarrow}
\hs^{n+1}(\BP,\BQ;F)
\longrightarrow
\cdots
$$
\end{lemma}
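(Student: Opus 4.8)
The statement is the long exact cohomology sequence of the triple $\BR\subseteq\BQ\subseteq\BP$, the poset analogue of the topological long exact sequence of a triple. The plan is to produce the short exact sequence of cochain complexes first and then invoke the standard zig-zag (snake) construction to obtain the long exact sequence, with $\delta$ the connecting homomorphism. Note that the case $\BR=\varnothing$ recovers the sequence (\ref{eq:6}) already established via Lemma \ref{section1:subsection1:lemma2}, so the argument should run parallel to that one while keeping track of $\BR$.

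First I would reduce the three relative complexes to products indexed by simplices. Since $j$ and $i$ are inclusions we may regard $N^*\BR\subseteq N^*\BQ\subseteq N^*\BP$ as nested simplicial subsets, and $s\in S^*(\BP;F)$ lies in $S^*(\BP,\BQ;F)$ (resp.\ $S^*(\BP,\BR;F)$) precisely when $s\cdot\sigma=0$ for every $\sigma\in N^*\BQ$ (resp.\ $\sigma\in N^*\BR$). Concretely, in each degree,
$$
S^n(\BP,\BQ;F)=\prod_{\sigma\in N^n\BP\setminus N^n\BQ}F(\sigma_n),\qquad
S^n(\BP,\BR;F)=\prod_{\sigma\in N^n\BP\setminus N^n\BR}F(\sigma_n),
$$
and similarly $S^n(\BQ,\BR;F)=\prod_{\tau\in N^n\BQ\setminus N^n\BR}F(\tau_n)$. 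Because $N^n\BR\subseteq N^n\BQ\subseteq N^n\BP$, the index set splits as a disjoint union $N^n\BP\setminus N^n\BR=(N^n\BP\setminus N^n\BQ)\sqcup(N^n\BQ\setminus N^n\BR)$. Under this identification the first map of the sequence is the inclusion of the first factor (extension by zero on $N^n\BQ\setminus N^n\BR$), and $i^*$ is the projection onto the second factor, so in each degree the sequence is the evident split short exact sequence of a product.

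It remains to check that these maps are chain maps and that $i^*$ lands in $S^*(\BQ,\BR;F)$. The inclusion $S^*(\BP,\BQ;F)\hookrightarrow S^*(\BP,\BR;F)$ is a chain map automatically, since both are subcomplexes of $S^*(\BP;F)$ carrying the restricted differential and one contains the other. That $i^*$ is a chain map is Lemma \ref{section1:subsection1:lemma1}; and if $s\cdot\sigma=0$ for all $\sigma\in N^*\BR$ then $i^*s\cdot\tau=s\cdot i\tau=0$ for $\tau\in N^*\BR$, so $i^*$ does carry $S^*(\BP,\BR;F)$ into $S^*(\BQ,\BR;F)$. Exactness in the middle is the observation that $s\in S^*(\BP,\BR;F)$ satisfies $i^*s=0$ if and only if $s$ also vanishes on all of $N^*\BQ$, i.e.\ if and only if $s\in S^*(\BP,\BQ;F)$.

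Degreewise exactness is transparent from the product description, so there is no serious obstacle; the points requiring care are purely bookkeeping. To confirm that the abstract map $i^*$ really is the projection—equivalently, that it is surjective onto $S^*(\BQ,\BR;F)$—I would exhibit an explicit section using the push-forward $i_*$ of (\ref{eq:14}): given $t\in S^*(\BQ,\BR;F)$ we have $i^*i_*t=t$ by Lemma \ref{section1:subsection1:lemma2}, and the lift $i_*t$ lies in $S^*(\BP,\BR;F)$ because for $\sigma\in N^*\BR\subseteq N^*\BQ$ the fibre $i^{-1}\sigma=\{\sigma\}$ gives $i_*t\cdot\sigma=t\cdot\sigma=0$. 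This last step is the only place the nesting $N^*\BR\subseteq N^*\BQ$ is essential, and it is the point I would be most careful about. Everything else being in place, the long exact sequence—with the connecting map $\delta$ raising degree by one—is the standard homological consequence of a short exact sequence of cochain complexes.
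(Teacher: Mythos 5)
Your proof is correct, and it follows exactly the route the paper itself uses for the special case $\BR=\varnothing$ in (\ref{eq:6}): degreewise the relative complexes are products over simplices outside the subposet's nerve, the sequence is the evident split exact sequence of index sets, and surjectivity of $i^*$ is witnessed by the push-forward section via Lemma \ref{section1:subsection1:lemma2}. The paper omits the proof of this lemma, but your argument (including the one genuinely non-formal check, that $i_*t$ still vanishes on $N^*\BR$) is precisely what is intended.
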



\begin{lemma}\label{lem:b}
Let $(\BP,\BQ,\BR)$ be the triple of Lemma \ref{lem:pqr} with
$\iota\colon \hs^{n}(\BQ,\BR; F) \ra \hs^n(\BQ; F)$ induced by the
inclusion $S^{n}(\BQ,\BR; F) \ra S^n(\BQ;F)$ and 
$\beta:HS^n(\BQ;F)\ra HS^{n+1}(\BP,\BQ;F)$ the
connecting homomorphism of the pair
$(\BP,\BQ)$. If $\delta$ is the connecting homomorphism of Lemma
\ref{lem:pqr} then $\delta= \beta \iota$.
\end{lemma}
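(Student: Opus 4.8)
The plan is to realize both $\delta$ and $\beta$ as connecting homomorphisms of two short exact sequences that fit into a commutative ladder, and then invoke naturality of the connecting map. Concretely, the sequence of Lemma \ref{lem:pqr},
$$
0\longrightarrow S^*(\BP,\BQ;F)\longrightarrow S^*(\BP,\BR;F)\stackrel{i^*}{\longrightarrow} S^*(\BQ,\BR;F)\longrightarrow 0,
$$
has connecting homomorphism $\delta$, while the sequence (\ref{eq:6}) for the pair $(\BP,\BQ)$ (whose quotient map is $i^*$, since here the relevant inclusion is $i$),
$$
0\longrightarrow S^*(\BP,\BQ;F)\longrightarrow S^*(\BP;F)\stackrel{i^*}{\longrightarrow} S^*(\BQ;F)\longrightarrow 0,
$$
has connecting homomorphism $\beta$.

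First I would build a morphism from the first sequence to the second. Because $\BR\subseteq\BQ$, a cochain vanishing on all $\BQ$-simplices a fortiori vanishes on all $\BR$-simplices, so $S^*(\BP,\BQ;F)\subseteq S^*(\BP,\BR;F)$; I take the left vertical map to be the identity on $S^*(\BP,\BQ;F)$, the middle to be the inclusion $S^*(\BP,\BR;F)\hookrightarrow S^*(\BP;F)$, and the right to be the inclusion $\iota\colon S^*(\BQ,\BR;F)\hookrightarrow S^*(\BQ;F)$ that induces the map $\iota$ of the statement. Next I would check the two squares commute. The left square commutes because both composites are the inclusion of $S^*(\BP,\BQ;F)$ into $S^*(\BP;F)$. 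The right square commutes because the top map $i^*$ is literally the restriction to $S^*(\BP,\BR;F)$ of the pull-back $i^*\colon S^*(\BP;F)\ra S^*(\BQ;F)$ — the defining formula $(i^*s)\cdot\tau=s\cdot i\tau$ is the same in both rows — and $\iota$ merely re-includes the result into $S^*(\BQ;F)$.

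With the ladder in place, naturality of the connecting homomorphism applied to this morphism of short exact sequences gives a commuting square whose top edge is $\delta$, bottom edge is $\beta$, left edge is $\iota$, and right edge is the map on $\hs^{n+1}(\BP,\BQ;F)$ induced by the identity. Reading off this square yields $\beta\circ\iota=\id\circ\delta=\delta$, which is exactly the claim. The only point to watch is the bookkeeping in the verification that the two rows share the same quotient map $i^*$ and the same left-hand inclusion, so that the ladder is a genuine morphism of short exact sequences; once these identifications are made, the conclusion is immediate from the standard naturality of the snake-lemma boundary map. A fully explicit alternative would unwind the cochain-level definitions and note that a single lift $\tilde c\in S^n(\BP,\BR;F)\subseteq S^n(\BP;F)$ of a relative cocycle $c$ computes both boundary classes $[d\tilde c]$ simultaneously, but the naturality argument avoids that computation.
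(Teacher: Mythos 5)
Your argument is correct: the two short exact sequences do form a commutative ladder with vertical maps $\id$, $S^*(\BP,\BR;F)\hookrightarrow S^*(\BP;F)$ and $\iota$, and naturality of the connecting homomorphism then gives $\delta=\beta\iota$ exactly as you say. The paper itself omits the proof of this lemma, evidently regarding it as the standard fact you have written out, so there is nothing to contrast with; your cochain-level remark (that a lift $\tilde c\in S^n(\BP,\BR;F)\subseteq S^n(\BP;F)$ computes both boundary classes at once) is the same argument unwound.
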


\section{Cellular cohomology of posets with
  coefficients in a presheaf}
\label{section2}

Singular cohomology can be defined for any space
$X$. If $X$ has a cellular structure -- for example is a
CW-complex -- then cellular cohomology can also be defined and singular
cohomology can be computed using it. In this section we define, for a
large class of posets, a cellular cohomology with
coefficients in a presheaf. 
A primordial version, along the lines of Proposition \ref{proposition:cellular:relative}
    below, appears in \cite[Section 4]{Yuzvinsky87}.
As in topology we define a cochain complex using the relative
cohomology of pairs -- which is not hard -- and then expend some effort
describing it more explicitly. 
In the next section we will describe a
situation in which the $\invlim_\BP^* F$ can be computed via this
cellular cohomology. 

\subsection{The definition of cellular cohomology}
\label{section2:subsection1}

We begin by recalling some poset terminology. If $x\leq y\in\BP$ and for any
$x\leq z\leq y$ we have either $z=x$ or $z=y$, then 
$y$ is said to \emph{cover\/} $x$, written $x\prec y$. 
$\BP$ is
\emph{graded\/} if there exists a \emph{rank function\/} $\rk:\BP\ra\Z$, i.e. a
function such
that (i) $x<y$ implies $\rk(x)<\rk(y)$, and (ii) $x\prec y$
implies $\rk(y)=\rk(x)+1$. 

Fix a rank function $\rk$ on graded $\BP$ and assume further that
$\rk$ is bounded above with $r=\max_{x\in\BP}\{\rk (x)\}$. Define the
\emph{corank\/} function 
$\cork{\cdot} : \BP \ra \Z^{\geq 0}$
by $\cork{x} = r-\rk (x)$. Filter $\BP$ by corank by letting
\begin{equation}
  \label{eq:24}
\BPn k = \{x\in \BP \mid \cork{x} \leq k\}.  
\end{equation}
There is thus a sequence of inclusions
$\BPn 0 \subset \BPn 1 \subset \BPn 2 \subset \cdots$
If $F$ is a presheaf on $\BP$ then we get induced presheaves $F$ on
each $\BP^k$ via the inclusions $\BP^k\hookrightarrow \BP$.

\emph{From now on all posets will be graded with a bounded rank function
and with a corank function $\cork{\cdot}: \BP \ra \Z^{\geq 0}$;
we will
abbreviate this set-up to ``graded with a corank function''.}

\begin{definition}
Let $\BP$ be graded with a corank function, $\{\BP^k\}_{k\in\Z}$
the associated filtration in (\ref{eq:24}) and $F$ a presheaf on $\BP$.
The cellular cochain complex $C^*(\BP;F)$ has chain groups
$$
C^n(\BP;F) := \hs^n(\BPn n , \BPn {n-1} ; F)
$$
and differential 
$C^{n-1}(\BP;F)\kern-1pt\stackrel{\delta}{\longrightarrow}C^n(\BP;F)$
by taking $\BR=\BPn {n-2}$, $\BQ=\BPn {n-1}$ and $\BP=\BPn
{n}$ in Lemma \ref{lem:pqr} and defining $\delta$ to be the coboundary map in
the associated the long exact
sequence
$$
C^{n-1}(\BP;F) = \hs^{n-1}(\BPn {n-1},\BPn {n-2};F)
\stackrel{\delta}{\longrightarrow}
\hs^{n}(\BPn {n},\BPn {n-1};F) = C^{n}(\BP;F).
$$
The cellular cohomology
  of $\BP$ with coefficients in the presheaf $F$ is defined to be the
homology of this complex:
$$
\hc^*(\BP; F) := H(C^*(\BP;F),\delta).
$$
\end{definition}

That $\delta^2=0$ is a standard argument following from Lemma
\ref{lem:b}. Clearly $C^n(\BP;F)=0$ for $n<0$ and in degree zero we have 
$$
C^0 (\BP;F)= \hs^0(\BPn 0 , \varnothing ; F)= \hs^0(\BPn 0 ; F) = 
{\textstyle{\invlim_{\BP^0} F}}=\prod_{\cork{x} = 0} F(x).
$$


We can write an explicit formula for $\delta$: if $s\in S^{n-1}(\BP^{n-1},\BP^{n-2};F)$ is a
cocycle with homology class $[s]$ then $\delta [s]=[t]$ where for $\ss\in N^n\BP^n$ we have
\begin{equation}
  \label{eq:4}
t\cdot\ss=
\left\{
  \begin{array}{ll}
    (-1)^n F^{\ss_{n-1}}_{\ss_n}(s\cdot d_n\ss),&\text{ if
    }\cork{\ss_n}=n\text{ and }\cork{\ss_{n-1}}=n-1,\\
    0,&\text{otherwise.}
  \end{array}
\right.  
\end{equation}
This comes about by
writing a formula for the connecting homomorphism of the pair
$(\BP^n,\BP^{n-1})$ and then using Lemma \ref{lem:b}.

\subsection{Describing the cellular cochain complex}
\label{section2:subsection2}

Continuing the analogy with the cellular cohomology of a CW-complex, the role of cells is
played by the closed intervals $\BPge{x}$ and the role of boundary spheres of
cells by
the open intervals $\BPg{x}$:
$$
\BPge{x}=\{y\in\BP\,|\,y\geq x\}\text{ and }
\BPg{x}=\{y\in\BP\,|\,y> x\}.
$$
$\BP$ is \emph{locally finite\/} if for any $x\in\BP$ there are only finitely many
$y$ with $x\prec y$. If $\BP$ is graded with a corank function and
locally finite, then the interval $\BPge{x}$ is a finite poset for
each $x$. 

We now describe the cellular
cochain complex $C^*(\BP;F)$ in terms of these intervals. The
exposition is complicated slightly by the fact that there may be
infinitely many elements of a given corank. In any case,
let $x$ be a fixed element of corank $n$. Then the diagram of
inclusions below left induces, by Lemmas
\ref{section1:subsection1:lemma1} and
\ref{section1:subsection1:lemma2}, the commuting diagram on the right: 
$$
\xymatrix{
\BPge{x} \vrule width 1.5mm height 0 mm depth 0mm
\ar@{>->}[r]^-{\ve_x}  
& 
\BPn{n}\\
\BPg{x}  \vrule width 1.5mm height 0 mm depth 0mm
\vrule width 0mm height 4.5mm depth 0mm
\ar@{>->}[u]^{j} \ar@{>->}[r]   
&
\BPn{n-1} \vrule width 0mm height 5mm depth 0mm
\ar@{>->}[u]_{i}
}
\;\;\;\;\;\;\;\;\;\;
\xymatrix{
S^*(\BPge{x};F) 
\ar@{->>}[d]_-{j^*} & 
S^*(\BPn{n};F) \ar@{->>}[d]^-{i^*}  \ar@{->>}[l]_-{\ve_x^*}\\
S^*(\BPg{x};F)  
&S^*(\BPn{n-1};F) \ar@{->>}[l]   
}
$$
As the diagram commutes, $\ve_x^*$ restricts to a chain map $\ker
i^*\ra \ker j^*$, i.e. a chain map
$$
\ve_x^*:S^*(\BP^n,\BP^{n-1};F)\ra S^*(\BPge{x},\BPg{x};F)
$$
which for $s\in S^*(\BP^n,\BP^{n-1};F)$ and $\ss\in N^*\BPge{x}$ is
given by $\ve_x^* s\cdot\ss=s\cdot \ve_x\ss$. 

\begin{proposition}\label{lem:epsilonchain}
The map of abelian groups
\begin{equation}
  \label{eq:16}
\ve: S^*(\BP^n,\BP^{n-1};F)\ra \prod_{\cork{x}=n} S^*(\BPge{x},\BPg{x};F)  
\end{equation}
given by
$\ve s\cdot x=\ve_x^*s$ is
a chain isomorphism.
\end{proposition}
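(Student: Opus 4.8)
The plan is to show that $\ve$ is a bijection of the underlying abelian groups; that it is a chain map is already recorded in the discussion preceding the statement, since each $\ve_x^*$ is the restriction of the pull-back (a chain map by Lemma~\ref{section1:subsection1:lemma1}) and the differential on the product $\prod_{\cork{x}=n}S^*(\BPge{x},\BPg{x};F)$ is taken componentwise.

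First I would pin down explicitly which simplices index the nonzero components on each side, using that the corank function is \emph{order-reversing}: if $x<y$ then $\rk(x)<\rk(y)$, so $\cork{x}>\cork{y}$. Hence for a simplex $\ss=\ss_m\leq\cdots\leq\ss_0$ the minimal vertex $\ss_m$ carries the \emph{largest} corank, $\cork{\ss_m}\geq\cork{\ss_{m-1}}\geq\cdots\geq\cork{\ss_0}$. The simplex lies in $\BPn{n-1}$ exactly when $\cork{\ss_m}\leq n-1$, so by the description of the relative complex as $\Ker i^*$ an element $s\in S^*(\BPn{n},\BPn{n-1};F)$ is precisely one with $s\cdot\ss=0$ unless $\cork{\ss_m}=n$. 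Likewise, for $x$ of corank $n$, a simplex $\tau=\tau_m\leq\cdots\leq\tau_0$ of $\BPge{x}$ lies in $\BPg{x}$ exactly when $\tau_m>x$, so $t\in S^*(\BPge{x},\BPg{x};F)=\Ker j^*$ is precisely one with $t\cdot\tau=0$ unless $\tau_m=x$.

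The heart of the argument is then a bijection of indexing sets. I claim that a simplex $\ss$ of $N^*\BPn{n}$ with $\cork{\ss_m}=n$ lies in $N^*\BPge{x}$ with minimal vertex $x$ for a \emph{unique} $x$ of corank $n$, namely $x=\ss_m$. Indeed $\ss\in N^*\BPge{x}$ means $\ss_m\geq x$; if moreover $\cork{x}=n=\cork{\ss_m}$, then $x<\ss_m$ is impossible (it would force $\cork{x}>\cork{\ss_m}$), so $x=\ss_m$. Thus the simplices on which $s$ may be nonzero partition, according to their minimal vertex, into blocks indexed over the various $x$ with $\cork{x}=n$, and the block at $x$ consists exactly of the simplices of $\BPge{x}$ with minimal vertex $x$ --- precisely those indexing $S^*(\BPge{x},\BPg{x};F)$. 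Since $\ve_x^*s\cdot\tau=s\cdot\tau$, the map $\ve$ is nothing but this regrouping.

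Consequently $\ve$ is injective (if every $\ve_x^*s=0$ then $s\cdot\ss=0$ for all $\ss$ with $\cork{\ss_m}=n$, and $s$ already vanishes elsewhere, so $s=0$) and surjective, with explicit inverse sending a tuple $(t_x)_{\cork{x}=n}$ to the cochain $s$ defined by $s\cdot\ss=t_{\ss_m}\cdot\ss$ when $\cork{\ss_m}=n$ and $s\cdot\ss=0$ otherwise; one checks $s\in\Ker i^*$ and $\ve s=(t_x)$ using the two support conditions above (when $\tau_m>x$ both $s\cdot\tau$ and $t_x\cdot\tau$ vanish). I expect the only real obstacle to be the uniqueness clause in the bijection, i.e. confirming that the order-reversing graded structure forces $x=\ss_m$; once that bookkeeping is in place the remaining verifications are routine.
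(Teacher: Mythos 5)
Your proposal is correct and takes essentially the same route as the paper: both arguments rest on the observation that a simplex of $N^*\BPn{n}$ not lying in $N^*\BPn{n-1}$ has minimal vertex of corank exactly $n$ and hence lies in the image of $\ve_x$ for the unique $x$ equal to that minimal vertex, so that $\ve$ is just a regrouping of coordinates. The paper phrases this as separate injectivity and surjectivity checks with the same explicit inverse you describe, so no further comment is needed.
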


\begin{proof}
The differential on the product is $\prod d_x$ with $d_x$ the
differential on $S^*(\BPge{x},\BPg{x};F)$, and $\ve$ is a chain map as
each $\ve^*_x$ is.
Let $s,s'\in S^i(\BP^n,\BP^{n-1};F)$ be such that $\ve s=\ve
s'$. If $\ss\in N^i\BP^n\setminus N^i\BP^{n-1}$ then
$|\ss_i|=n$ and so $\ss=\ve_{\ss_i}(\ss)$
for $\ss\in N^i\BPge{\ss_i}$. In particular $s\cdot\ss=s\cdot
\ve_{\ss_i}(\ss)=s'\cdot \ve_{\ss_i}(\ss)=s'\cdot\ss$ and so $\ve$ is injective.
On the otherhand let $s\in \prod_{\cork{x}=n} S^*(\BPge{x},\BPg{x};F)$ be of degree $i$.
If $\tau\in N^i\BP^n\setminus N^i\BP^{n-1}$ then
$\tau=\ve_{\tau_i}(\tau)$ with $\tau\in N^i\BP_{\geq\tau_i}$ but is not in the 
image of any other $\ve_x$. Let $t\in S^i(\BP^n;F)$ be such
that
$$
t\cdot\tau=
\left\{
    \begin{array}{ll}
      (s\cdot x)\cdot\ss,&\text{if }\tau\in N^i\BP^n\setminus
    N^i\BP^{n-1}\text{ where }\tau=\ve_x(\tau)\\
      0,&\tau\in N^i\BP^{n-1}.
    \end{array}
\right.
$$
Then $t\in S^i(\BP^n,\BP^{n-1};F)$ with $\ve t=s$ and so $\ve$ is surjective. 
\end{proof}

Writing $\ve$ as well for the composition
$$
\hs^*(\BP^n,\BP^{n-1};F)
\stackrel{\cong}{\longrightarrow} 
H\biggl(\prod_{\cork{x}=n}S^*(\BPge{x},\BPg{x};F)\biggr)
\stackrel{\cong}{\longrightarrow} 
\prod_{\cork{x}=n}\hs^*(\BPge{x},\BPg{x};F)
$$
the differential of the cellular cochain complex can be described in 
terms of the isomorphism (\ref{eq:16}) as the map making the following diagram commute:
$$
\xymatrix{
C^{n-1}=\hs^{n-1}(\BP^{n-1},\BP^{n-2};F) 
\ar@{->}[d]_-{\ve} \ar@{->}[r]^-{\delta}& 
\hs^{n}(\BP^{n},\BP^{n-1};F)=C^n  \ar@{->}[d]^-{\ve}\\
\prod_{\cork{y}=n-1}\hs^{n-1}(\BPge{y},\BPg{y};F) \ar@{.>}[r] &
\prod_{\cork{x}=n}\hs^n(\BPge{x},\BPg{x};F) 
}
$$
We will call this map $\delta$ as well.
An explicit formula for $\ve^{-1}$ can be extracted from the surjectivity part
of the proof of Proposition \ref{lem:epsilonchain} and by combining
this with
(\ref{eq:4}) and (\ref{eq:16}) we have proved the following alternative
description of the cellular cochain complex:

\begin{proposition}\label{cellular:complex:version2}
Let $\BP$ be graded with a corank function and $F$ a presheaf on
$\BP$. Then there are isomorphisms 
$$
C^n(\BP;F) \cong \prod_{\cork{x} = n}\hs^n(\BPge x, \BPg x ; F)  
$$
with respect to which the differential in the cellular cochain complex  
$C^{n-1}(\BP;F) \kern-1pt\stackrel{\delta}{\longrightarrow}C^n(\BP;F)$
has the following effect on an element $s\in\prod_{\cork{y}=n-1} \hs^{n-1}(\BPge y,\BPg
y;F)$.  Suppose that $s\cdot y=[s_y]$ for $s_y$ a cocycle in $S^{n-1}(\BPge
y,\BPg y;F)$. Then $\delta s\cdot x=[t_x]$ where
$t_x\in S^{n}(\BPge x,\BPg x;F)$ is given by
\begin{equation}
  \label{eq:2}
  t_x\cdot\ss=
\left\{
  \begin{array}{ll}
   (-1)^n F^{y}_{x}(s_y\cdot d_n\ss),&\text{if }\ss_n=x\prec y=\ss_{n-1},\\
   0,&\text{otherwise,} 
  \end{array}
\right.
\end{equation}
where $\ss\in N^n\BP_{\geq x}$.
\end{proposition}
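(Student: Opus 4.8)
The plan is to obtain the cochain-group isomorphism almost immediately and then to transport the differential explicitly. The isomorphism is essentially already displayed just before the statement: one applies the homology functor to the chain isomorphism $\ve$ of Proposition~\ref{lem:epsilonchain} and then uses that homology commutes with products. Since products are exact in $\ab$, one has $H\bigl(\prod_{\cork{x}=n} S^*(\BPge{x},\BPg{x};F)\bigr)\cong\prod_{\cork{x}=n}\hs^*(\BPge{x},\BPg{x};F)$, and composing with $H(\ve)$ gives $C^n(\BP;F)=\hs^n(\BP^n,\BP^{n-1};F)\cong\prod_{\cork{x}=n}\hs^n(\BPge{x},\BPg{x};F)$. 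The real content is to compute the transported differential $\ve\,\delta\,\ve^{-1}$ on the products, which I would carry out as a three-step diagram chase starting from $s=(s_y)_{\cork{y}=n-1}$ with $s\cdot y=[s_y]$.

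First I would apply $\ve^{-1}$, reading its explicit form off the surjectivity part of the proof of Proposition~\ref{lem:epsilonchain} (with $n-1$ in place of $n$): the preimage $u:=\ve^{-1}(s)\in S^{n-1}(\BP^{n-1},\BP^{n-2};F)$ satisfies $u\cdot\tau=s_{\tau_{n-1}}\cdot\tau$ when $\tau\in N^{n-1}\BP^{n-1}\setminus N^{n-1}\BP^{n-2}$ (so its minimal element $\tau_{n-1}$ has corank exactly $n-1$) and $u\cdot\tau=0$ otherwise. Next I would push $u$ through the differential in the form \eqref{eq:4}. The key simplification here is that $d_n\ss=\ss_{n-1}<\cdots<\ss_0$ has minimal element $\ss_{n-1}$, which in the nonzero case of \eqref{eq:4} has corank $n-1$; hence $d_n\ss\notin N^{n-1}\BP^{n-2}$ and $u\cdot d_n\ss=s_{\ss_{n-1}}\cdot d_n\ss$. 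This gives $\delta[u]=[t]$ with $t\cdot\ss=(-1)^n F^{\ss_{n-1}}_{\ss_n}(s_{\ss_{n-1}}\cdot d_n\ss)$ exactly when $\cork{\ss_n}=n$ and $\cork{\ss_{n-1}}=n-1$. Finally I would apply $\ve$, whose component at a fixed $x$ of corank $n$ is $\ve_x^* t\cdot\ss=t\cdot\ss$ for $\ss\in N^n\BPge{x}$.

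The step that will need the most care---and the main obstacle---is translating the corank conditions of \eqref{eq:4} into the cover relation appearing in \eqref{eq:2}. On the component indexed by $x$ with $\cork{x}=n$, a chain $\ss\in N^n\BPge{x}$ has $\ss_n\geq x$; the condition $\cork{\ss_n}=n=\cork{x}$ forces $\rk(\ss_n)=\rk(x)$, and hence $\ss_n=x$ by property~(i) of the rank function. The remaining condition $\cork{\ss_{n-1}}=n-1$ then gives $\rk(\ss_{n-1})=\rk(x)+1$, and since no element can have rank strictly between these two values this is precisely $x\prec\ss_{n-1}$. Writing $y=\ss_{n-1}$, and noting that $d_n\ss\in N^{n-1}\BPge{y}$ so that $s_y\cdot d_n\ss$ is defined, I would conclude that $t_x\cdot\ss=(-1)^n F^{y}_{x}(s_y\cdot d_n\ss)$ when $\ss_n=x\prec y=\ss_{n-1}$ and $0$ otherwise. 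This is exactly \eqref{eq:2}, completing the identification of the differential.
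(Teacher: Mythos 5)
Your proposal is correct and follows exactly the route the paper takes (which it only sketches): pass the chain isomorphism $\ve$ of Proposition~\ref{lem:epsilonchain} to homology using exactness of products, then conjugate the differential by reading $\ve^{-1}$ off the surjectivity argument and feeding it through the explicit connecting-map formula \eqref{eq:4}. Your translation of the corank conditions $\cork{\ss_n}=n$, $\cork{\ss_{n-1}}=n-1$ into $\ss_n=x\prec\ss_{n-1}$ via the rank function is the only detail the paper leaves implicit, and you handle it correctly.
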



In particular for a fixed element $x$ of corank $n$, the diagram
\begin{equation}
  \label{eq:3}
\xymatrix{
C^{n-1}\cong\prod_{\cork{y}=n-1}\hs^{n-1}(\BPge{y},\BPg{y};F)\ar@{->}[r]^-{\delta} \ar@{->}[d]_-{\text{proj.}} &
\prod_{\cork{x}=n}\hs^n(\BPge{x},\BPg{x};F)\cong C^n
\ar@{->}[d]^-{\text{proj.}}\\
\prod_{x\prec y}\hs^{n-1}(\BPge{y},\BPg{y};F) \ar@{->}[r]& \hs^{n}(\BPge{x},\BPg{x};F)
}  
\end{equation}
commutes, where the bottom horizontal map is the restriction of
$\delta$ to the $y$ covering $x$
followed by projection onto the $x$-coordinate.
In words, if $s\in\prod_{\cork{y}=n-1} \hs^{n-1}(\BPge y,\BPg
y;F)$ then the component of $\delta s$ indexed by $x$ depends
only on the components of $s$ indexed by the $y$ covering $x$.

If $y$ is a fixed element of corank $n-1$ and $x$ a fixed element
of corank $n$, then the map $\delta^y_x$ making the diagram
$$
\xymatrix{
C^{n-1}\cong\prod_{\cork{y}=n-1}\hs^{n-1}(\BPge{y},\BPg{y};F)\ar@{->}[r]^-{\delta} \ar@{->}[d]_-{\text{proj.}} &
\prod_{\cork{x}=n}\hs^n(\BPge{x},\BPg{x};F)\cong C^n
\ar@{->}[d]^-{\text{proj.}}\\
\hs^{n-1}(\BPge{y},\BPg{y};F) \ar@{.>}[r]^-{\delta^y_x}& \hs^{n}(\BPge{x},\BPg{x};F)
}
$$
commute is called the \emph{matrix element\/} corresponding to the
pair $(x,y)$. Explicitly, if $s$ is a cocycle in
$\hs^{n-1}(\BPge{y},\BPg{y};F)$ then $\delta^y_x[s]=[t]$ where for
$\ss\in N^n\BPge{x}$, the coordinate $t\cdot\ss$ is given by
(\ref{eq:2}) with $s_y$ replaced by $s$. In general $\delta$ is
not determined by its matrix elements. If $\BP$ is locally
  finite however then the bottom left term in 
(\ref{eq:3}) is a direct sum, and for $s\in C^{n-1}$ we have 
$$
\delta s\cdot x=\sum_{x\prec y} \delta^y_x(s\cdot y).
$$


We can further refine the chain groups of the cellular complex:

\begin{proposition}\label{section2:subsection2:proposition2}
Let $x\in\BP$. Then 
$$
\hs^*(\BPge x, \BPg x;F) 
\cong \hs^*(\BPge x, \BPg x;\Delta F(x)) 
\cong \widetilde{\hs}^{*-1}(\BPg x ; \Delta F(x))
$$ 
where $\Delta F(x)$ is the constant presheaf with value $F(x)\in\ab$. 
\end{proposition}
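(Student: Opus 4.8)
The plan is to prove the two isomorphisms separately, as they have quite different characters: the first is an equality of cochain complexes on the nose, while the second is the algebraic reflection of the fact that $\geo{N^*\BPge x}$ is the cone on $\geo{N^*\BPg x}$ with cone point $x$.

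\emph{First isomorphism.} I would begin by unwinding the relative complex. An element $s\in S^n(\BPge x,\BPg x;F)$ is a tuple in $\prod_{\ss\in N^n\BPge x}F(\ss_n)$ vanishing on every $\ss\in N^n\BPg x$; since each $\ss\in N^n\BPge x$ has minimum $\ss_n\geq x$, and $\ss\in N^n\BPg x$ precisely when $\ss_n>x$, the surviving coordinates are exactly those with $\ss_n=x$, each lying in $F(x)$. The same description holds verbatim with $\Delta F(x)$ in place of $F$, so both relative cochain groups are equal to $\prod_{\ss_n=x}F(x)$. It then remains to match the differentials. For a coordinate $\ss$ with $\ss_n=x$, in the expression $ds\cdot\ss=\sum_{i=0}^{n-1}(-1)^i\,s\cdot d_i\ss+(-1)^nF^{\ss_{n-1}}_{\ss_n}(s\cdot d_n\ss)$ the faces $d_i\ss$ for $i<n$ retain $\ss_n=x$ as minimum, so those terms carry no structure map and agree for $F$ and $\Delta F(x)$. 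For the last term, $d_n\ss$ has minimum $\ss_{n-1}$, and $s\cdot d_n\ss$ can be nonzero only when $\ss_{n-1}=x$; but $x=\ss_n\leq\ss_{n-1}$ then forces $\ss_{n-1}=x$, whence $F^{\ss_{n-1}}_{\ss_n}=F^x_x=\id_{F(x)}$, again matching $\Delta F(x)$. The crux is thus that the only structure map of $F$ ever appearing on the relative complex is $\id_{F(x)}$, so $S^*(\BPge x,\BPg x;F)=S^*(\BPge x,\BPg x;\Delta F(x))$ and the first isomorphism follows.

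\emph{Second isomorphism.} Here I work with $A:=F(x)$ and write $Y=\BPge x$, $Z=\BPg x$, feeding $(Y,Z)$ into a \emph{reduced} version of (\ref{eq:6}). Augmenting $S^*(Y;\Delta A)$ and $S^*(Z;\Delta A)$ diagonally in degree $-1$ as in \S\ref{section1:subsection4}, the pull-back $f^*$ commutes with the augmentations and stays surjective (it is $\id_A$ in degree $-1$ and surjective in nonnegative degrees by Lemma \ref{section1:subsection1:lemma2}), while its kernel is unchanged since in degree $-1$ it is $\ker(\id_A)=0=S^{-1}(Y,Z;\Delta A)$. Hence the short exact sequence of the pair upgrades to $0\to S^*(Y,Z;\Delta A)\to\widetilde{S}^*(Y;\Delta A)\to\widetilde{S}^*(Z;\Delta A)\to0$, giving a reduced long exact sequence with connecting map $\beta\colon\widetilde{\hs}^{\,n-1}(Z;\Delta A)\to\hs^n(Y,Z;\Delta A)$. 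I would then note that $Y=\BPge x$ has $x$ as a minimum, hence as an initial object of the associated category, so $\geo{N^*Y}$ is contractible and $\widetilde{\hs}^{\,*}(Y;\Delta A)=0$. The segment $\widetilde{\hs}^{\,n-1}(Y)\to\widetilde{\hs}^{\,n-1}(Z)\xrightarrow{\beta}\hs^n(Y,Z)\to\widetilde{\hs}^{\,n}(Y)$ then forces $\beta$ to be an isomorphism in every degree, which is exactly the claim.

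The routine parts (unwinding the relative groups and comparing the two differentials) are immediate; the step demanding the most care is the reduced long exact sequence. I must check that the augmented pull-back is genuinely surjective and retains the same kernel complex, so that relative cohomology is undisturbed by the augmentation — this is what makes the degree shift come out correctly and, in particular, makes the degenerate case $\BPg x=\varnothing$ (where $\widetilde{\hs}^{\,-1}(\varnothing;\Delta A)=A\cong\hs^0(\{x\};\Delta A)$) fall out uniformly rather than requiring separate treatment. The only other point to state cleanly is the contractibility of $\geo{N^*\BPge x}$, which I would deduce from the natural transformation from the constant functor at $x$ to the identity of $\BPge x$ supplied by $x\leq y$ for all $y\geq x$.
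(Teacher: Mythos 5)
Your proof is correct and its skeleton matches the paper's: both halves rest on the observation that only the identity structure map of $F$ survives in the relative complex, and on the contractibility of $\geo{N^*\BPge x}$ combined with the long exact sequence of the pair. The differences are in packaging. For the first isomorphism the paper constructs the natural transformation $\kappa\colon F\to\Delta F(x)$ with $\kappa_y=F^y_x$ and shows that the induced $\kappa_*$ restricts to an isomorphism of relative complexes; your direct identification of both relative cochain groups with $\prod_{\ss_n=x}F(x)$, together with the check that the only structure map appearing in the relative differential is $F^x_x=\id$, is precisely the computation that makes $\kappa_*$ bijective, so this is the same argument unpacked. For the second isomorphism the paper uses the \emph{unreduced} long exact sequence of $(\BPge x,\BPg x)$, which makes the connecting map an isomorphism only for $i>1$ and then forces a separate analysis of degrees $0$ and $1$ (split into the cases $\cork x>0$ and $\cork x=0$); your augmented short exact sequence $0\to S^*(\BPge x,\BPg x;\Delta A)\to\widetilde{S}^*(\BPge x;\Delta A)\to\widetilde{S}^*(\BPg x;\Delta A)\to 0$ buys a statement that is uniform in all degrees, including the degenerate case $\BPg x=\varnothing$, at the cost of verifying that the augmentations are compatible with the pull-back and leave the kernel unchanged — which you do. Both routes are sound; yours trades the paper's low-degree case analysis for a small amount of extra verification up front.
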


\begin{proof}
Consider $F$ and $\Delta F(x)$ as presheaves on the closed interval $\BPge{x}$.
In $\prsh(\BPge{x})$ there is a natural transformation $\kappa\colon F
\ra \Delta F(x)$ defined by $\kappa_y = F^y_x$
which induces a chain map 
$\kappa_* \colon S^*(\BPge x ; F) \ra S^*(\BPge x ;\Delta F(x))$.
The inclusion $\BPg{x}\hookrightarrow\BPge{x}$ and 
Lemma \ref{section1:subsection1:lemma4} mean that $\kappa_*$
restricts to a chain map
$\kappa_* \colon S^*(\BPge x,\BPg x ; F) \ra S^*(\BPge x,\BPg x ;\Delta F(x))$
which turns out to be an inclusion. 
Surjectivity is also not hard to show,
and we have an induced isomorphism in cohomology:
$$
\hs^*(\BPge x,\BPg x ; F) 
\stackrel{\cong}{\longrightarrow} 
\hs^*(\BPge x,\BPg x ;\Delta F(x)).
$$
We observed in
\S\ref{section1:subsection1} that $\hs^*(\BPge{x};\Delta F(x))\cong
H^*(\geo{N^*\BPge{x}},F(x))$, the ordinary singular cohomology of the
classifying space $\geo{N^*\BPge{x}}$. As $\BPge{x}$ has a unique minimal
element the space $\geo{N^*\BPge{x}}$ is contractible, giving
$$
\hs^i(\BPge x;\Delta F(x)) \cong 
\begin{cases}
  F(x), & i=0,\\
0, & \text{ otherwise.}
\end{cases}
$$
Thus for $i>1$ the coboundary map 
in the  long exact sequence (\ref{eq:6}) of the pair
$(\BPge{x},\BPg{x})$ is an isomorphism
$$
\widetilde{\hs}^{i-1}(\BP_{>x};\Delta F(x))
\cong
\hs^{i-1}(\BP_{> x};\Delta F(x))
\stackrel{\cong}{\longrightarrow}
\hs^i(\BP_{\geq x},\BP_{>x};\Delta F(x)).
$$
$\hs^0(\BP_{\geq x};\Delta F(x))$ is the diagonal copy of $F(x)$
in $S^0(\BP_{\geq x};\Delta F(x))$, so
$\hs^0(\BP_{\geq x},\BP_{>x};\Delta F(x))$ is trivial when $\cork{x}>0$ and
isomorphic to $F(x)$ when $\cork{x}=0$ (and thus $\cong
\widetilde{\hs}^{-1}(\BP_{>x};\Delta F(x))$ in either case). The long
exact sequence of the pair $(\BP_{\geq x},\BP_{>x})$
collapses to the short exact sequence:
$$
0
\ra HS^0(\BPge{x};\Delta F(x)) 
\ra HS^0(\BPg x;\Delta F(x)) 
\ra HS^1(\BPge x,\BPg x;\Delta F(x)) 
\ra 0.
$$
When $\cork{x}>0$ the first map can be identified with 
$\hs^0(\star;\Delta F(x))\ra \hs^0(\BP_{>x};\Delta F(x))$, giving
$\hs^1(\BP_{\geq x},\BP_{>x};\Delta
F(x))\cong\widetilde{\hs}^0(\BP_{>x};\Delta F(x))$. For $\cork{x}=0$ these
two are both trivial. 
\end{proof}

This leads to our third description of the cellular chain complex in
terms of the reduced cohomology of open intervals. 
Writing $\aa$ for the isomorphism
$$
\prod_{\cork{x}=n}\widetilde{\hs}^{n-1} (\BPg{x};\Delta F(x))
\stackrel{\cong}{\longrightarrow}
\prod_{\cork{x}=n} \hs^n (\BPge{x},\BPg{x};F),
$$
induced by Proposition \ref{section2:subsection2:proposition2},
then if
$s$ is an element of the left hand side with $s\cdot x=[s_x]$ for $s_x$ a cocycle in
$\widetilde{S}^{n-1}(\BPg{x};\Delta F(x))$, we have $\aa s=t$ with $t\cdot
x=[t_x]$, where $t_x\in S^n(\BPge{x},\BPg{x};\Delta F(x))$ is given by
$$
t_x\cdot\ss=
\left\{
  \begin{array}{ll}
    (-1)^n s_x\cdot d_n\ss,&\text{if }\ss_n=x<\ss_{n-1},\\
    0&\text{otherwise},
  \end{array}
\right.
$$
and $\ss\in N^n\BPge{x}$.
Computing $\aa^{-1}\delta\aa$ with the $\delta$ of (\ref{eq:2})
gives:

\begin{proposition}
\label{proposition:cellular:relative}
Let $\BP$ be graded with a corank function and $F$ a presheaf on $\BP$.
Then there are isomorphisms
$$
C^n \cong \prod_{\cork{x} = n}\widetilde{\hs}^{n-1}(\BPg x ; \Delta F(x))  
$$
and differential 
$C^{n-1}\stackrel{\delta}{\longrightarrow} C^n$
where if $s\in\prod_{\cork{y}=n-1}\widetilde{\hs}^{n-2}(\BPg{y};\Delta
F(y))$  
with $s\cdot y=[s_y]$ \/for $s_y$ a cocycle in $\widetilde{S}^{n-2}(\BPg y;\Delta F(y))$, then
$\delta s=t$ with $t\cdot x=[t_x]$ where 
$t_x\in\widetilde{S}^{n-1}(\BPg x;\Delta F(x))$ is given by
\begin{equation}
  \label{eq:5}
t_x\cdot\ss=
\left\{
  \begin{array}{ll}
    (-1)^{n-1} F^y_x(s_y\cdot d_{n-1}\ss),&\text{if }x\prec y=\ss_{n-1}<\ss_{n-2},\\
    0&\text{otherwise,}
  \end{array}
\right.  
\end{equation}
and $\ss\in N^{n-1}\BPg{x}$. 
\end{proposition}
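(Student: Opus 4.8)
The plan is to obtain both the identification of the cochain groups and the differential formula by transporting the description of Proposition \ref{cellular:complex:version2} across the isomorphism $\aa$ introduced just above; concretely, the differential sought is $\aa^{-1}\delta\aa$ with $\delta$ given by (\ref{eq:2}). The isomorphism on cochain groups needs no argument: it is simply the composite
$$
C^n\cong\prod_{\cork{x}=n}\hs^n(\BPge{x},\BPg{x};F)\cong\prod_{\cork{x}=n}\widetilde{\hs}^{n-1}(\BPg{x};\Delta F(x))
$$
of Proposition \ref{cellular:complex:version2} with the product over $x$ of the isomorphisms of Proposition \ref{section2:subsection2:proposition2}. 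All the content lies in checking that conjugating (\ref{eq:2}) by $\aa$ produces (\ref{eq:5}).

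First I would track a representative through $\aa$, then $\delta$, then $\aa^{-1}$. Starting from $s$ with $s\cdot y=[s_y]$ for $s_y$ a cocycle in $\widetilde{S}^{n-2}(\BPg{y};\Delta F(y))$, the map $\aa$ in degree $n-1$ cones $s_y$ off with apex $y$: its representative is supported on the $\tau\in N^{n-1}\BPge{y}$ with minimum $\tau_{n-1}=y$, on which it takes the value $(-1)^{n-1}s_y\cdot d_{n-1}\tau$. Feeding this into $\delta$ via (\ref{eq:2}), the component at $x$ of corank $n$ is supported on $\ss'\in N^n\BPge{x}$ with $\ss'_n=x\prec y=\ss'_{n-1}$, where it equals $(-1)^nF^y_x$ applied to the cone representative at $d_n\ss'$; since $d_n\ss'$ has minimum $y$, this is $(-1)^n(-1)^{n-1}F^y_x(s_y\cdot d_{n-1}d_n\ss')$. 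Finally $\aa^{-1}$ in degree $n$ inverts the coning with apex $x$: a simplex $\ss\in N^{n-1}\BPg{x}$ arises as $d_n\ss'$ for $\ss'$ the cone on $\ss$ with apex $x$, contributing a further $(-1)^n$, and then $d_{n-1}d_n\ss'=d_{n-1}\ss$. The three signs multiply to $(-1)^{3n-1}=(-1)^{n-1}$, matching (\ref{eq:5}).

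The one delicate point is the passage between the $F$-coefficients, in which (\ref{eq:2}) is phrased, and the constant coefficients $\Delta F(y)$ and $\Delta F(x)$ in which the coning isomorphisms of Proposition \ref{section2:subsection2:proposition2} live. These differ by the natural transformations $\kappa$ of that proposition, with components $\kappa_z=F^z_y$ over $\BPge{y}$ and $\kappa_z=F^z_x$ over $\BPge{x}$. What keeps the bookkeeping harmless is that on precisely the simplices carrying the cone representatives -- those with minimum $y$, respectively $x$ -- the relevant component is $F^y_y=\id$, respectively $F^x_x=\id$, so the $F$- and $\Delta F$-representatives agree there; the only surviving presheaf map is the $F^y_x$ contributed by $\delta$, recording the change of apex along the cover $x\prec y$. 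I expect this interplay, together with the vanishing of the degenerate contributions, to be the main obstacle: the hypothesis $\ss_{n-1}<\ss_{n-2}$ in (\ref{eq:5}) is exactly what guarantees that $d_{n-1}\ss$ has all entries strictly above $y$, hence lies in $N^{n-2}\BPg{y}$ so that $s_y\cdot d_{n-1}\ss$ is defined, whereas if $\ss_{n-1}=\ss_{n-2}$ the face meets the boundary $\BPg{y}$ and the term vanishes.
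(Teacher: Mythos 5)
Your proposal is correct and is exactly the paper's own argument: the paper obtains Proposition \ref{proposition:cellular:relative} by stating the explicit formula for $\aa$ and then ``computing $\aa^{-1}\delta\aa$ with the $\delta$ of (\ref{eq:2})'', which is precisely the representative-chasing you carry out. Your sign bookkeeping $(-1)^{n-1}\cdot(-1)^n\cdot(-1)^n=(-1)^{n-1}$ and your observations about the $\kappa$-components being the identity on the supporting simplices and about the role of the condition $\ss_{n-1}<\ss_{n-2}$ correctly fill in the details the paper leaves to the reader.
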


Similar comments pertain to this description of the cellular complex
as those following Proposition \ref{cellular:complex:version2}. In particular if $y$ is fixed of
corank $n-1$ and $x$ fixed of corank $n$ then the matrix element
$\delta^y_x$ sends $[s]$ to $[t]$ with $s\in 
\widetilde{\hs}^{n-2}(\BPg{y},\Delta F(y))$ a cocycle and
$t\cdot\ss$ given by (\ref{eq:5}) with $s_y$ replaced by $s$.

\subsection{Locally finite posets} 
\label{section2:subsection3}

When
$\BP$ is locally finite the middle of the three terms in Proposition
\ref{section2:subsection2:proposition2} can be further reduced. 
For $\BPge{x}$ is then a finite poset, so that
$S^*(\BPge{x},\BPg{x};\Delta\Z)$ has free cochain groups and hence
$S^*(\BPge{x},\BPg{x};\Delta F(x))\cong
S^*(\BPge{x},\BPg{x};\Delta\Z) \otimes F(x)$. In any case -- locally
finite or not -- $HS^i(\BPge{x},\BPg{x};F)$ $\cong
HT^i(\BPge{x},\BPg{x};F)=0$ for $i>\cork{x}$ and so 
\begin{equation}
  \label{eq:25}
\hs^n(\BPge{x},\BPg{x};\Delta F(x))
\cong
HT^n(\BPge{x},\BPg{x};\Delta\Z)\otimes F(x)  
\end{equation}
when $\cork{x}=n$,
giving
$$
C^n\cong
\prod_{\cork{x}=n} \htt^n(\BPge{x},\BPg{x};\Delta\Z)\otimes F(x).
$$
The differential
is determined
by the matrix elements $\delta_x^y$ in the locally finite case, and the image of an $s\in
\htt^{n-1}(\BPge{y},\BPg{y};\Delta\Z)\otimes F(y)$ is determined by
the images of elements of the form $[s_\ss]\otimes a$, where $a\in
F(y)$, and for $\ss\in N_\circ^{n-1}\BPge{y}$ (with necessarily
$\ss_{n-1}=y$) the tuple $s_\ss\in
T^{n-1}(\BPge{y},\BPg{y};\Delta\Z)$ has $1$ in the $\ss$-coordinate
and $0$'s elsewhere. Then
\begin{equation}
  \label{eq:20}
\delta_x^y:[s_\ss]\otimes a\mapsto [s_{x\ss}]\otimes (-1)^n F_x^y(a) 
\end{equation}
where $x\ss$ is the result of pre-appending $x$ to $\ss$. 

Similarly we have
$$
\widetilde{\hs}^{n-1}(\BPg{x};\Delta F(x))
\cong
\widetilde{HT}^{n-1}(\BPg{x};\Delta\Z)\otimes F(x)
$$
and so 
\begin{equation}
  \label{eq:18}
C^n\cong
\prod_{\cork{x}=n}
\widetilde{\htt}^{n-1}(\BPg{x};\Delta\Z)\otimes F(x)  
\end{equation}
with
\begin{equation}
 \label{eq:10}
\delta_x^y: [s_\ss]\otimes a\mapsto [s_{y\ss}]\otimes (-1)^{n-1}F_x^y(a),
\end{equation}
where $\ss\in N_\circ^{n-2}\BPg{y}$ and $s_\ss,a$ are analogous to (\ref{eq:20}).

We now turn to a description of $C^*(\BP;F)$ in terms of generators and relations.
If $X$ is a CW-complex then the cellular cochains in degree $n$ are
free on the $n$-cells -- one chooses free generators by fixing
orientations for the cells in the usual way. Looking forward to
\S\ref{section4:2}, if $\BP$
is the cell poset of a regular CW-complex then $C^n$ will turn out --
not surprisingly -- to be free on the elements of corank $n$, and free generators
can easily be found. If $\BP$ is a geometric lattice (see
\S\ref{section4:5}) then $C^n$ is also
free, but with
each $x$ of corank $n$ now contributing $(-1)^n\mu(x,\1)$ free factors,
rather than just one, where
$\mu$ is the M\"obius function of the lattice\footnote{The M\"obius
  function is the inverse of the zeta function in the incidence
  algebra of $\BP$. Explicitly, for $k$ a field it is the $k$-valued function on
the intervals given by $\mu(x,x)=1$ and $\mu(x,y)=-\sum_{x\leq
  z<y}\mu(x,z)$ when $x<y$. See \cite[Section 3.7]{Stanley12}}.  
In general the situation is more complex. First of all $C^n$ may not
be free, and even when it is
we prefer not to privelege a particular choice of basis. 

We thus give a non-free presentation for $C^n$ when $\BP$ is locally
finite (see
also \cite[\S1.6]{Wachs07}). Let
$x\in \BP$ be fixed with $\cork{x}=n$ and recall that 
$N^n_\circ\BPge{x}$ are the non-degenerate $n$-simplicies in the nerve
of $\BPge{x}$. The
elements of $N^n_\circ\BPge{x}$ thus have the
form $\ss=x\ss_{n-1}\cdots\ss_0$ with $\cork{\ss_i}=i$. 

\begin{definition}
Let $\tau=x\tau_{n-1}\cdots\tau_{j+1}\tau_{j-1}\cdots\tau_0$ be a
fixed $(n-1)$-simplex in
$N^{n-1}_\circ\BPge{x}$ with $0\leq j<n$ and $\cork{\tau_i}=i$. 
We call the set $B_\tau$ of all $n$-simplicies in $N^n_\circ\BPge{x}$ of the form
$$
x\tau_{n-1}\cdots\tau_{j+1}y\,\tau_{j-1}\cdots\tau_0
$$ 
(where necessarily $\cork{y}=j$) the compatible family given by $\tau$. 
Let $s\in\bigoplus_\ss F(x)$, the direct sum over the $\ss\in
N^n_\circ\BPge{x}$, and let $B_\tau$ be some compatible family. 
We say that $s$ is $B_\tau$-constant if there is a fixed $a\in F(x)$ such
that 
$$
s\cdot\ss =
\left\{
\begin{array}{ll}
 a, & \ss\in B_\tau,\\
 0, &\text{ otherwise.}  
\end{array}\right.
$$
\end{definition}

Now let
$K_x\subset\bigoplus_{\ss}  F(x)$
be the subgroup generated by the $s$ that are $B_\tau$-constant for
some $\tau$, where
$\tau$ ranges over the $(n-1)$-simplicies of $N^{n-1}_\circ\BPg{x}$.

\begin{proposition}\label{prop:Kiso}
Let $\BP$ be locally finite and let $x\in\BP$ with $\cork{x}=n$. Then
there are isomorphisms
$$
\hs^n(\BPge{x},\BPg{x};\Delta F(x))
\cong
\biggl(\bigoplus_{\ss} F(x)\biggr) /K_x
\cong
A_x\otimes F(x)
$$ 
where the direct sum is over the $\ss\in N^n_\circ\BPge{x}$, and $A_x$
is the abelian group 
having presentation with generators the $\ss\in N^n_\circ \BPge{x}$ and
relations the $\sum_{\sigma \in B_\tau} \sigma = 0$,
for each compatible family $B_\tau$.
\end{proposition}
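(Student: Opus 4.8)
The plan is to realise $\hs^n(\BPge x,\BPg x;\Delta F(x))$ as the top cohomology group of the \emph{non-degenerate} relative complex, read off the first isomorphism as a cokernel, and then deduce the second by right-exactness of $-\otimes F(x)$. The whole computation is possible precisely because $\cork x=n$ forces the relative complex to terminate in degree $n$.

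First I would reduce to the non-degenerate complex with constant coefficients. By Proposition \ref{section2:subsection2:proposition2} we may replace $F$ by $\Delta F(x)$, and by the homotopy equivalence $T^*(\BPge x,\BPg x;-)\simeq S^*(\BPge x,\BPg x;-)$ recorded at the end of \S\ref{section1:subsection3} we may compute with $T^*$ instead of $S^*$. The gain is a corank count that pins down the top of the complex: in a non-degenerate $k$-simplex $\ss_k<\cdots<\ss_0$ of $\BPge x$ the coranks strictly decrease and lie in $\{0,\dots,n\}$, so for $k=n$ they are forced to be $\cork{\ss_i}=i$, whence $\ss_n=x$. Thus $N^{n+1}_\circ\BPge x=\varnothing$, every non-degenerate $n$-simplex already lies in the relative complex, and (local finiteness making $\BPge x$ finite) $T^n(\BPge x,\BPg x;\Delta F(x))=\bigoplus_{\ss}F(x)$, the sum over $\ss\in N^n_\circ\BPge x$. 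With nothing in degree $n+1$,
$$
HT^n(\BPge x,\BPg x;\Delta F(x))=\Bigl(\bigoplus_\ss F(x)\Bigr)\big/\im\,d,\qquad d\colon T^{n-1}\to T^n.
$$

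The heart of the argument is the identification $\im d=K_x$, and this is the step I expect to cost the most care. Since the presheaf is constant, the differential (\ref{eq:1}) is the ordinary simplicial coboundary, and in the relative complex its top-face term drops out, because $d_n$ deletes the minimum $x$ and lands in $\BPg x$. For a relative $(n-1)$-simplex $\tau$, whose missing corank I call $j$, let $e_\tau$ be the generator taking value $1$ on $\tau$. A face $d_i\ss$ of a non-degenerate $n$-simplex $\ss$ has missing corank $i$, so $d_i\ss=\tau$ forces $i=j$ and then says exactly that $\ss$ is obtained from $\tau$ by inserting a corank-$j$ element, i.e. $\ss\in B_\tau$. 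Hence $d e_\tau=(-1)^j\sum_{\ss\in B_\tau}e_\ss$, which is $(-1)^j$ times the $B_\tau$-constant generator of value $1$. Letting $\tau$ range over all relative $(n-1)$-simplices (equivalently over the compatible families) and over all coefficient values $a\in F(x)$ shows $\im d$ is precisely the subgroup $K_x$ generated by the $B_\tau$-constant elements, giving the first isomorphism $\hs^n(\BPge x,\BPg x;\Delta F(x))\cong(\bigoplus_\ss F(x))/K_x$. The bookkeeping to watch is the corank argument guaranteeing that $d_i\ss$ is again a non-degenerate relative simplex for $i<n$, and that distinct faces carry distinct missing coranks, so that no cancellation occurs.

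For the second isomorphism I would run the identical computation over $\Delta\Z$. By its very presentation, $A_x=(\bigoplus_\ss\Z)/\langle\sum_{\ss\in B_\tau}\ss\rangle$ is exactly $\coker(d_\Z\colon T^{n-1}(\BPge x,\BPg x;\Delta\Z)\to T^n)$. Since $\BPge x$ is finite, $T^{n-1}$ and $T^n$ are finitely generated free abelian groups, $T^\bullet(\Delta F(x))=T^\bullet(\Delta\Z)\otimes F(x)$, and the constant-presheaf differential satisfies $d_{F(x)}=d_\Z\otimes\id_{F(x)}$ on the nose (the vanishing top-face term being identically zero in either coefficients). Right-exactness of $-\otimes F(x)$ then yields $\coker(d_{F(x)})\cong\coker(d_\Z)\otimes F(x)$, i.e. $(\bigoplus_\ss F(x))/K_x\cong A_x\otimes F(x)$, completing the proof.
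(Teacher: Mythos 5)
Your proposal is correct and follows essentially the same route as the paper: identify the top relative non-degenerate cochain group with $\bigoplus_\ss F(x)$ using the corank count, note $T^{n+1}=0$ so the cohomology is a cokernel, compute $de_\tau=(-1)^j\sum_{\ss\in B_\tau}e_\ss$ to get $\im d=K_x$, and obtain $A_x\otimes F(x)$ by tensoring the integral computation (the paper cites its equation (\ref{eq:25}) for this last step, which you re-derive via right-exactness).
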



\begin{proof}
We have $T^n(\BPge{x},\BPg{x};\Delta
F(x))$ is the direct sum $\bigoplus_\ss F(x)$ over the $\ss\in
N^n_\circ\BPge{x}$. Moreover $T^{n+1}=0$ so that
$\hs^n\cong\htt^n=(\bigoplus_\ss F(x))/\im d^{n-1}$, and it remains to
show that $\im d^{n-1}=K_x$.

Firstly, let $t\in T^{n-1}(\BPge{x},\BPg{x};\Delta F(x))$ be an
element that is non-zero only in the 
coordinate indexed by
$\tau=x\tau_{n-1}\cdots\tau_{j+1}\tau_{j-1}\cdots\tau_0$ where
$\cork{\tau_i}=i$. 
We have for $\ss\in N^n_\circ\BP$ that
$dt\cdot \sigma \neq 0$ 
if and only if $\sigma$ is of the form
$x\tau_{n-1}\cdots\tau_{j+1}y\,\tau_{j-1}\cdots\tau_0$,
in which case $dt\cdot \ss=t\cdot\tau$.
Thus, $dt$ is $B_\tau$-constant. As every element of
$T^{n-1}(\BPge{x},\BPg{x};\Delta F(x))$ is a sum of such $t$, we have
$\im d^{n-1}\subset K_x$. 

Conversely, let $s$ be $B_\tau$-constant with value $a$ and with $\tau$
as in the previous paragraph. 
Define $t\in T^{n-1}$ to have value $(-1)^j a$ in the coordinate indexed by
$\tau$ and $0$ elsewhere. Then $dt=s$ and so $K_x\subset
d^{n-1}$. The second isomorphism follows from the first and
(\ref{eq:25}). 
\end{proof}

\begin{remark}\label{remark:nonfree}
The group $A_x$ of Proposition \ref{prop:Kiso} need not be free: let $X$ be
a finite $(n-1)$-dimensional regular CW-complex with homology
$H_{n-2}(X;\Z)$ containing the torsion subgroup $T_{n-2}\not=0$
(for example $X$ is the result of repeatedly suspending $\R P^2$). Let
$\BQ$ be the cell poset of $X$ (see 
\S\ref{section4:2})  and let $\BP$ be $\BQ$
with a unique minimal element $\0$ formally attached. Then,
$$
A_\0
\cong
H^{n-1}(|N^*\BPg{\0}|,\Z)
\cong
H^{n-1}(|N^*\BQ|,\Z)
\cong
H^{n-1}(X,\Z)
\cong
T_{n-2}\oplus\text{free part of }H_{n-1}(X,\Z)
$$
where we have used the fact (see \S\ref{section4:2}) that $X$
and $|N^*\BQ|$ are homeomorphic. 
\end{remark}

Here is our final version of the cellular complex:

\begin{proposition}\label{prop:cellularcomplex.v4}
Let $\BP$ be graded locally finite with a corank function and $F$ a
presheaf on $\BP$. Then
there are isomorphisms
$$
C^n \cong \prod_{\cork{x}=n} 
A_x\otimes F(x)
$$
where $A_x$ is 
the abelian group 
having presentation with generators the $\ss\in N^n_\circ \BPge{x}$ and
relations the $\sum_{\sigma \in B_\tau} \sigma = 0$
for each compatible family $B_\tau$ in $N^n_\circ\BPge{x}$. If $\cork{y}=n-1$ and $x\prec y$
then the matrix element $\delta_x^y:A_y\otimes F(y)\rightarrow
A_x\otimes F(x)$ of the differential $\delta:C^{n-1}\rightarrow C^n$
is given by
\begin{equation}
  \label{eq:22}
\delta_x^y:\ss\otimes a\mapsto x\ss\otimes (-1)^n F_x^y(a)  
\end{equation}
where $\ss\in N_\circ^{n-1}\BPge{y}$ is a generator of $A_y$ with $a\in
F(y)$ and $x\sigma$ is the result of pre-appending $x$ to $\ss$. 
\end{proposition}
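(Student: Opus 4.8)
The plan is to assemble Proposition \ref{prop:cellularcomplex.v4} directly from the pieces already established, since almost all of the real work has been done in Propositions \ref{section2:subsection2:proposition2} and \ref{prop:Kiso}. The chain-group isomorphism requires no new argument: chaining the second isomorphism of Proposition \ref{prop:Kiso} with the product decomposition of Proposition \ref{cellular:complex:version2} gives
$$
C^n \cong \prod_{\cork{x}=n} \hs^n(\BPge x,\BPg x;F) \cong \prod_{\cork{x}=n} A_x\otimes F(x),
$$
where the identification $\hs^n(\BPge x,\BPg x;F)\cong \hs^n(\BPge x,\BPg x;\Delta F(x))$ is the first isomorphism of Proposition \ref{section2:subsection2:proposition2}. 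So the only substantive content is to compute what the differential $\delta$ looks like under these identifications, i.e.\ to verify formula (\ref{eq:22}) for the matrix element $\delta_x^y$.

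For that, I would track the differential through the same chain of isomorphisms, reusing the formula for $\delta_x^y$ already recorded in (\ref{eq:20}). Recall that under the isomorphism $\hs^n(\BPge x,\BPg x;\Delta F(x))\cong \htt^n(\BPge x,\BPg x;\Delta\Z)\otimes F(x)$ of (\ref{eq:25}), the matrix element was shown in (\ref{eq:20}) to send $[s_\ss]\otimes a\mapsto [s_{x\ss}]\otimes(-1)^nF_x^y(a)$, where $s_\ss$ is the indicator cochain on the non-degenerate simplex $\ss\in N_\circ^{n-1}\BPge y$ and $x\ss$ is the simplex obtained by pre-appending $x$. The proof of Proposition \ref{prop:Kiso} exhibits the further isomorphism $\htt^n(\BPge x,\BPg x;\Delta\Z)\otimes F(x)\cong A_x\otimes F(x)$ as the obvious one carrying the class $[s_\ss]$ to the generator $\ss$ modulo the compatible-family relations. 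The plan is therefore to observe that pre-appending $x$ is exactly the operation $\ss\mapsto x\ss$ appearing on generators of $A_x$, so that the displayed formula (\ref{eq:20}) transports verbatim to (\ref{eq:22}); the sign and the coefficient map $F_x^y$ carry through unchanged since the intervening isomorphisms are $F(x)$-linear (respectively $F(y)$-linear) and fix the simplicial index.

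The one point requiring genuine care — and the step I expect to be the main obstacle — is the compatibility of the presentation of $A_x$ with the map $\ss\mapsto x\ss$ on the level of \emph{classes} rather than representatives. Concretely, I must check that pre-appending $x$ sends the relation-generating compatible families $B_\tau$ in $N^{n-1}_\circ\BPge y$ into (sums of) compatible families in $N^n_\circ\BPge x$, so that the assignment $\ss\otimes a\mapsto x\ss\otimes(-1)^nF_x^y(a)$ is well defined on the quotient $A_y\otimes F(y)$ and lands in $A_x\otimes F(x)$. This is where the hypothesis that $x\prec y$ (rather than merely $x<y$) is used: it guarantees that $x\ss$ is again non-degenerate with $\cork{x\ss_i}$ stepping correctly, and that a compatible family in $\BPge y$ of the form $\tau\mapsto B_\tau$ maps to a compatible family (with the same varying coordinate) in $\BPge x$. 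Once well-definedness is secured, verifying that $\delta^y_x$ as defined abstractly in (\ref{eq:3}) coincides with (\ref{eq:22}) is a matter of unwinding the two sides against the explicit formulas (\ref{eq:2}) and (\ref{eq:20}), which I would present as a short computation rather than grinding through in full.
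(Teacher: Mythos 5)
Your proposal is correct and matches the paper's treatment: the paper gives no separate proof, presenting the proposition as the direct combination of Proposition \ref{cellular:complex:version2}, Proposition \ref{section2:subsection2:proposition2}, the locally finite identification (\ref{eq:25}) with matrix elements (\ref{eq:20}), and the presentation of $A_x$ from Proposition \ref{prop:Kiso} --- exactly the chain of isomorphisms you describe. Your extra concern about well-definedness on classes is harmless but not strictly needed, since $\delta^y_x$ is already defined at the level of cohomology and the formula need only be checked on the generators $[s_\ss]\otimes a$, where it is (\ref{eq:20}) verbatim.
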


\begin{figure}
  \centering
\begin{pspicture}(0,0)(12,5.75)
\rput(-1.5,0.25){
\rput(7.5,2.5){
\rput(0,0){\BoxedEPSF{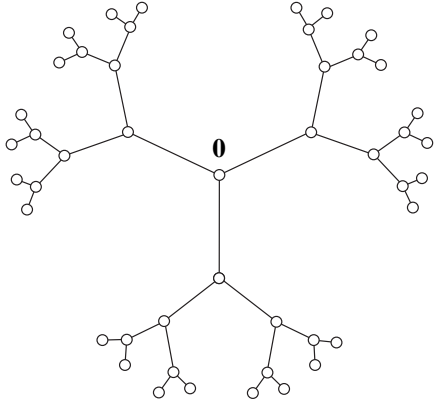 scaled 450}}
}
\rput(7.5,3.2){$\0$}
}
\end{pspicture}
\caption{$3$-valent tree $\BP$ for which $HS^*(\BP;F)$ cannot be
  computed cellularly.}
  \label{fig:tree}
\end{figure}

We finish with an example of a $\BP$ for which $HS^*(\BP;F)\not\cong HC^*(\BP;F)$.

\begin{example}\label{example:tree}
Let $\BP$ be a finite $3$-valent tree with a distinguished vertex $\0$ -- as for example in Figure
\ref{fig:tree} -- 
with vertices ordered by $x\leq y$ when the unique path without
backtracking from $\0$ to $y$ passes through $x$. Then $\BP$ is graded
with the rank of a vertex
the number of edges between it and $\0$.
The maximal elements are called \emph{leaves\/}. 
Assume for simplicity that the leaves are all equidistant from $\0$
(or have the same rank).
By Proposition
\ref{prop:cellularcomplex.v4}, and if the maximum rank $r$ is $>1$, we
have $C^0(\BP;\Delta\Z)$ is
free abelian on the leaves; $C^1(\BP;\Delta\Z)$ is free abelian on the
corank $1$ elements, and $C^i(\BP;\Delta\Z)=0$ for
$i>1$. Thus
$HC^i(\BP;\Delta\Z)=0$ for $i>0$ and
$HC^0(\BP;\Delta\Z)$ is free abelian on the pairs
of leaves. 
On the other hand $\BP$ has a unique minimum $\0$ so that $|N^*\BP|$
is a cone on the space $|N^*\BPg{\0}|$, hence contractible. In
particular $HS^0(\BP;\Delta\Z)\cong\Z$ and $HS^i(\BP;\Delta\Z)=0$ for $i>0$.
\end{example}

\section{Computing cohomology cellularly}
\label{section3}

In general the cohomology groups $\hs^* (\BP;F)$ and $\hc^* (\BP;F)$
are not isomorphic as Example \ref{example:tree} shows.
For a large class of posets however we have $HS^*(\BP;F)\cong
HC^*(\BP;F)$ and so the higher limits can be computed cellularly.
The situation is analogous to
topology: 
if $X$ is a filtered space then one can construct the cellular cochain
complex of $X$, although in general the resulting cellular cohomology is not
isomorphic to the singular cohomology. If a
vanishing condition on the relative (singular) cohomologies of 
successive pairs of the filtration is satisfied then the two are isomorphic.

\begin{definition}
\label{definition:cellular}
Let $\BP$ be graded with a corank function. Then $\BP$ is cellular if
and only if for every presheaf $F$ we have
\begin{equation}
  \label{eq:11}
\hs^i(\BP^n,\BP^{n-1};F)=0\text{ for }i\not=n.  
\end{equation}
\end{definition}

Thus the cohomology of the pair $(\BP^n,\BP^{n-1})$ vanishes in every
degree except the one that carries the cochains of the cellular
complex. 
Using the results of the previous section we have $\BP$ cellular when
\begin{equation}
  \label{eq:12}
\hs^i(\BPge{x},\BPg{x};\Delta F(x))
\cong
\widetilde{\hs}^{i-1}(\BPg{x};\Delta F(x)) =0
\quad
(i\not=\cork{x})   
\end{equation}
for every $x\in\BP$ and every presheaf $F$. Moreover
a locally finite $\BP$ is cellular when 
\begin{equation}
  \label{eq:13}
\hs^i(\BPge{x},\BPg{x};\Delta\Z)
\cong
\widetilde{\hs}^{i-1}(\BPg{x};\Delta\Z) 
\cong
\widetilde{H}^{i-1}(|N^*\BPg{x}|,\Z) =0
\quad
(i\not=\cork{x})   
\end{equation}
for every $x\in\BP$
and with the last term the ordinary reduced cohomology of the space
$|N^*\BPg{x}|$. Cellularity for locally finite $\BP$ thus has nothing
to do with the presheaf. 

\begin{remark}
It is easy to find non-cellular posets, arguing topologically as in
Remark \ref{remark:nonfree} of \S\ref{section2:subsection3}. If $X$ is a regular
CW-complex with non-vanishing cohomology in some non-zero degree $<\dim
X$, with cell poset $\BQ$ (see
\S\ref{section4:2}) and $\BP$ the result of
formally adjoining a 
unique minimal element $\0$ to $\BQ$, then (\ref{eq:13}) fails for
$\BP$ at $x=\0$. 
\end{remark}

We devote \S\ref{section4} to examples of posets that \emph{are\/} cellular.

\begin{remark}
Even if $\BP$ is cellular the cochains of the cellular complex need not
be free: let $X$ be a finite $n$-dimensional regular CW-complex with homology
$H_i(X;\Z)$ the finite group $T\not=0$ in degree $\dim X-1$ and
vanishing in degrees $0<i<\dim
X-1$. Again, suspending $\R P^2$ some number of times provides an
example. If $\BP$ is the result of formally adjoining a unique minimal
element to the cell poset of $X$, then $\BP$ is graded, cellular and
with corank function, and $C^{n+1}\cong T$.
\end{remark}

\begin{remark}
On the other hand if $\BP$ is locally finite cellular and the cochains
of the cellular complex \emph{are\/} free then we have
$$
C^n\cong\prod_{\cork{x}=n} \Z^{\mu_x}\otimes F(x)
$$
where $\mu_x=(-1)^{\cork{x}-1}\mu(x,\1)$ with $\mu$ the M\"{o}bius
function of the poset obtained by adjoining a unique maximum $\1$ to
$\BP$.
This follows by \cite[Proposition 3.8.6]{Stanley12} which interprets
$\mu_x$ in terms of the reduced Euler characteristic of the space
$|N^*\BPg{x}|$, and this characteristic has only one non-zero term by cellularity.
\end{remark}


\begin{theorem}\label{thm:main}
Let $\BP$ be graded, cellular, locally finite with a corank function and let $F$ be a
presheaf on $\BP$. Then there is an isomorphism
$$
\hs^* (\BP; F)\cong \hc^* (\BP; F).
$$
\end{theorem}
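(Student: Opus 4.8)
The plan is to use the corank filtration $\BPn 0\subset\BPn 1\subset\cdots$ in two complementary ways: through its spectral sequence when $\BP$ is finite, and through a projective resolution of $\Delta\Z$ that it induces when $\BP$ is merely locally finite. The common input is that, by cellularity, the relative groups $\hs^i(\BPn n,\BPn{n-1};F)$ vanish for $i\ne n$ and equal $C^n(\BP;F)$ for $i=n$, and that the resulting first differential is exactly the cellular $\delta$, essentially by its definition via Lemma \ref{lem:b}.

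\emph{Finite case.} I would filter $S^*(\BP;F)$ decreasingly by $S^*(\BP,\BPn{p-1};F)=\Ker\bigl(S^*(\BP;F)\to S^*(\BPn{p-1};F)\bigr)$; by Lemma \ref{lem:pqr} the subquotients are the relative complexes $S^*(\BPn p,\BPn{p-1};F)$. Hence $E_1^{p,q}=\hs^{p+q}(\BPn p,\BPn{p-1};F)$, which cellularity concentrates on the row $q=0$ as $E_1^{p,0}=C^p(\BP;F)$ with $d_1=\delta$. Since $\BP$ is finite the filtration is finite, so the spectral sequence converges to $\hs^*(\BP;F)$; lying in a single row it collapses at $E_2$, giving $\hs^n(\BP;F)\cong\hc^n(\BP;F)$.

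\emph{Locally finite case.} Now the filtration is generally infinite, so I would recast the argument as a resolution to sidestep convergence questions. Take the normalized canonical resolution $A_*\to\Delta\Z$, with $A_n=\bigoplus_{\ss\in N^n_\circ\BP}\Upsilon_{\ss_n}\Z$, so that $\hom_{\prsh(\BP)}(A_*,F)=T^*(\BP;F)\simeq S^*(\BP;F)$, and filter it by the corank of the bottom vertex $\ss_n$, with $p$-th subquotient $\mathrm{gr}_p A_*$ supported on the simplices of corank $p$. Because $d_n$ strictly lowers corank, $\mathrm{gr}_p A_*$ splits as $\bigoplus_{\cork x=p}\Upsilon_x\,\widetilde C_{*-1}(\BPg x;\Z)$. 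Here local finiteness is decisive: each $\BPge x$ is finite, so this is a \emph{bounded} complex of projectives (degrees $0$ through $p$), and cellularity — applied with all coefficient groups, which by a universal–coefficient argument forces $\hs^*(\BPge x,\BPg x;\Delta\Z)$ to be free and concentrated in degree $p$, and hence forces $\widetilde H_*(\BPg x;\Z)$ to be free and concentrated in degree $p-1$ — makes its homology a single projective presheaf $Q_p$ in degree $p$. Thus $\mathrm{gr}_p A_*\simeq Q_p$, and $\hom(Q_p,F)\cong\hs^p(\BPn p,\BPn{p-1};F)=C^p(\BP;F)$ naturally in $F$ (using Proposition \ref{section2:subsection2:proposition2} and Proposition \ref{prop:cellularcomplex.v4}).

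\emph{Assembly, and the main obstacle.} Splicing the graded homotopy equivalences $\mathrm{gr}_p A_*\simeq Q_p$ would identify $A_*$, up to homotopy, with a complex $Q_*\colon\cdots\to Q_p\to Q_{p-1}\to\cdots\to Q_0$ carrying the induced connecting differentials; since $A_*\to\Delta\Z$ is a resolution so is $Q_*\to\Delta\Z$, and it is projective. Then $\hom_{\prsh(\BP)}(Q_*,F)$ computes $\hs^*(\BP;F)\cong\mathrm{Ext}^*_{\prsh(\BP)}(\Delta\Z,F)$ and is degreewise $C^*(\BP;F)$ with differential $\delta$, whence $\hs^*(\BP;F)\cong\hc^*(\BP;F)$. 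I expect the real work to be precisely this assembly: promoting the filtration-wise equivalences to an honest complex of projectives resolving $\Delta\Z$, and confirming that the differential it induces is the cellular $\delta$ rather than a filtered perturbation of it. This is exactly where local finiteness cannot be dispensed with, since boundedness of each $\mathrm{gr}_p A_*$ is what guarantees that the lone surviving homology group is projective, and it is also what frees the infinite case from the convergence bookkeeping that the finite case could afford to ignore.
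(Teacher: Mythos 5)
Your part (i) is the paper's part (i) almost verbatim (the only difference is an indexing convention that places the surviving line at $q=0$ rather than $q=1$), so there is nothing to add there.

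For the locally finite case your strategy --- filter the normalized canonical resolution by the corank of the bottom vertex, observe that the $p$-th graded piece splits as $\bigoplus_{\cork{x}=p}\Upsilon_x\widetilde{C}_{*-1}(\BPg{x};\Z)$, and collapse each graded piece to its homology $Q_p$ --- identifies the right objects, and your universal-coefficient argument correctly extracts from cellularity that $\widetilde{H}_*(\BPg{x};\Z)$ is free and concentrated in degree $\cork{x}-1$. But the step you defer, ``promoting the filtration-wise equivalences to an honest complex of projectives resolving $\Delta\Z$,'' is the entire content of the locally finite case, and it does not follow formally: termwise homotopy equivalences $\mathrm{gr}_pA_*\simeq Q_p$ do not assemble into a chain map $A_*\to Q_*$ without compatible choices of homotopies, and since the corank need not be bounded above the filtration can have infinitely many stages, so the usual induction up the filtration does not terminate. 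The paper avoids this entirely by never comparing with $A_*$: it writes down $B_n=\bigoplus_{\cork{x}=n}\Upsilon_xA_x$ with $A_x=\hom_\Z(\hs^n(\BPge{x},\BPg{x};\Delta\Z),\Z)$ and an explicit differential $\zeta=\sum\zeta_x^y$, and then proves exactness of $B_*$ \emph{pointwise}: for each $x$ the complex $B_*(x)$ is $\hom_\Z(-,\Z)$ applied to the cellular cochain complex $C^*(\BPge{x};\Delta\Z)$ of the \emph{finite} cellular poset $\BPge{x}$, so part (i) together with contractibility of $\geo{N^*\BPge{x}}$ gives $H_nB_*(x)=0$ for $n>0$ and $\cong\Z$ for $n=0$, whence $B_*\to\coker\zeta\cong\Delta\Z$ is a projective resolution and $\hom_{\prsh(\BP)}(B_*,F)\cong C^*(\BP;F)$ by the Yoneda adjunction. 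This stalkwise bootstrap from the finite case is the idea your outline is missing; to salvage your route you would need either to supply the assembly/perturbation argument explicitly or to verify exactness of $Q_*$ directly, which in effect reproduces the paper's argument.
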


\begin{proof}
\emph{(i).\/} Assume in addition to the conditions stated in the
theorem that
$\BP$ is also finite. We use a spectral sequence by filtering $S^*=
S^*(\BP; F)$ with $F^pS^*=S^*(\BP,\BP^p;F)$. By Lemma \ref{lem:pqr} 
we have a short exact sequence
\begin{equation}
  \label{eq:7}
0\longrightarrow 
S^*(\BP,\BP^{p+1};F)
\longrightarrow
S^*(\BP,\BP^p;F)
\longrightarrow
S^*(\BP^{p+1},\BP^p;F)
\longrightarrow
0
\end{equation}
hence $F^{p+1}S^*$
is a subcomplex of $F^{p}S^*$ and we have a bounded filtration
$$
0\subset F^cS^*\subset\cdots \subset F^{p+1}S^*\subset F^pS^*\subset
\cdots\subset F^{-1}S^*=S^* 
$$
where $c$ is the maximum corank.
The $E_0$ page of the associated spectral sequence has
$$
E_0^{p,q}=\frac{F^{p}S^{p+q}}{F^{p+1}S^{p+q}}
$$
which is just
$S^{p+q}(\BP^{p+1},\BP^{p};F)$ by (\ref{eq:7}). Since the differential on the $E_0$
page is induced by that on $S^*$ we get an $E_1$ page with
$$
E_1^{p,q}=\hs^{p+q}(\BP^{p+1},\BP^{p};F)
$$
When $q=1$ this is simply $C^p$, and by the cellular assumption all
other entries on the $E_1$ page are zero. 
One may check that
on this one line the spectral sequence differential $d^1$ agrees with
the differential in $C^*$. So the spectral sequence collapses at $E_2$ with the
cellular cohomology on the line $q=1$ and hence the result.

\emph{(ii).\/} Returning to the general case of a locally finite
$\BP$, we proceed differently so as to avoid questions about the
convergence of the spectral sequence used in (i). We define a 
projective resolution $B_*\ra\Delta\Z$ in $\prsh(\BP)$ such that
$\hom_{\prsh(\BP)}(B_*,F)=C^*(\BP,F)$. Let $B_n=\bigoplus_x\Upsilon_x$,
the direct sum over the $x$ of corank $n$, where
$\Upsilon_x:=\Upsilon_xA_x$ are Yoneda presheaves (\S\ref{section1:subsection1}) with
$$
A_x=\hom_\Z(\hs^n(\BPge{x},\BPg{x};\Delta\Z),\Z).
$$
$A_x$ is free
and so $\Upsilon_x$, and hence $B_n$, is projective. To
define $\zeta:B_n\rightarrow B_{n-1}$ let $x$ have corank $n$ and $x\prec y$. Define
$\zeta_x^y:S^{*-1}(\BPge{y},\BPg{y};\Delta\Z)\ra
S^*(\BPge{x},\BPg{x};\Delta\Z)$ by
$$
\zeta_x^y s\cdot\ss=
\left\{
\begin{array}{ll}
  (-1)^{n}s\cdot d_k\ss,&\text{if }\ss=xy\ss_{k-2}\ldots\ss_0,\\
  0,&\text{otherwise}
\end{array}
\right.
$$
where $s\in S^{k-1}(\BPge{y},\BPg{y};\Delta\Z)$ and $\ss\in
N^k\BPge{x}$. Then $\zeta_x^y$ is a chain map, inducing a map, which
we will also call $\zeta_x^y$:
$$
A_x=
\hom_\Z(\hs^n(\BPge{x},\BPg{x};\Delta\Z),\Z)
\ra 
\hom_\Z(\hs^{n-1}(\BPge{y},\BPg{y};\Delta\Z),\Z)
=A_y,
$$
and hence a presheaf morphism $\zeta_x^y:\Upsilon_x\ra\Upsilon_y$. Let
$\zeta=\sum_{x\prec y}\zeta_x^y:B_n\ra B_{n-1}$. The sequence
$\cdots\ra B_n\ra B_{n-1}\ra\cdots$ is exact at $B_n$ precisely 
when it is exact pointwise, i.e. for every $x$ the sequence
$\cdots\ra B_n(x)\ra B_{n-1}(x)\ra\cdots$ is exact at $B_n(x)$. But
this sequence is nothing other than the result of applying
$\hom_\Z(-,\Z)$ to the cellular cochain complex
$C^*(\BPge{x};\Delta\Z)$. By local finiteness $\BPge{x}$ is a finite
poset, and is cellular,  and so by part (i) we have
$\hc^*(\BPge{x};\Delta\Z)\cong\hs^*(\BPge{x};\Delta\Z)$, which in turn
is $\cong H^*(\geo{N^*\BPge{x}};\Z)$, and this vanishes outside degree $0$ as
$\geo{N^*\BPge{x}}$ is contractible. 
Thus $H_nB_*(x)$ vanishes when $n>0$ and is $\cong\Z$ when $n=0$. To
augment $B_*$ consider 
$$
B_1\stackrel{\zeta}{\ra}B_0\ra\coker\zeta\ra 0
$$
with the second map the quotient. Then for all $x$ we have
$\coker\zeta(x)\cong H_0B_*(x)\cong\Z$ and for
$x\leq y$ the map
$\coker\zeta(y)\ra\zeta(x)$ can be identified with the identity $\Z\ra\Z$. Thus
$\coker\zeta\cong\Delta\Z$ and we have our augmentation. 

Finally, if $F\in\prsh(\BP)$ then
\begin{align*}
  \label{eq:17}
  \hom_{\prsh(\BP)}(B_n,F)
&=
\hom_{\prsh(\BP)}\biggl(\bigoplus_x\Upsilon_x,F\biggr)
\cong
\prod_x \hom_{\prsh(\BP)}(\Upsilon_x,F)\\
&\cong
\prod_x\hom_\Z(A_x,F(x))
\cong
\prod_x\hom_\Z(A_x,\Z)\otimes F(x)\\
&\cong
\prod_x \hs^n(\BPge{x},\BPg{x};\Delta\Z)\otimes F(x)
\cong
C^n(\BP,F),
\end{align*}
and by (\ref{eq:21}) in \S\ref{section1:subsection1} we have 
$\hom_{\prsh(\BP)}(B_n\stackrel{\zeta}{\ra}B_{n-1},F)\cong
C^{n-1}(\BP;F)\stackrel{\delta}{\ra}C^n(\BP;F)$. 
\end{proof}

The spectral sequence in the proof of Theorem \ref{thm:main}
has its origins in the work of Godement \cite{Godement73}
(see also \cite{Baclawski75}). A special case of the projective
resolution appears in \cite[\S 1.3]{EverittTurner3}.

\section{Examples}
\label{section4}

In this section we identify some
important classes of posets as coming under the auspices of Theorem
\ref{thm:main} and describe the resulting cellular chain complexes, in
increasing order of complexity.





\subsection{Cell posets}
\label{section4:2}

A CW-complex $X$ is \emph{regular\/} if the attaching map of every
cell is a homeomorphism. In this case the \emph{cell poset\/} $\BP_X$ has
elements the cells of $X$ with cells $x\leq y$ iff $\ov{x}\supset \ov{y}$
(note: reverse inclusion). Since the closure of a cell 
meets only finitely many other cells this poset is locally
finite. It is also  graded and if $\dim   
X<\infty$ then the rank function is bounded with corank function given
by $|x|=\dim x$. 
We have a
topology to poset to topology progression given by $X\ra \BP_X\ra |N^*\BP_X|$
where $X$ and $|N^*\BP_X|$ are homeomorphic (see the proof of Theorem
III.1.7 in \cite{Lundell-Weingram69}). 

If $x$ is an $n$-cell then 
$\BPg{x}$ is the cell poset of the induced CW-decomposition of the
boundary $\partial x$, which is itself an $(n-1)$-sphere. Thus
$$
\widetilde{\hs}^{i-1}(\BPg{x};\Delta\Z) 
\cong
\widetilde{H}^{i-1}(\geo{N^*\BPg{x}},\Z) 
$$
vanishes outside degree $i=\cork{x}$ and
$\widetilde{\hs}^{\cork{x}-1}(\BPg{x};\Delta\Z)\cong\Z$.  
Cell posets are thus cellular with
$$
C^n(\BP;F)
\cong\prod_{\cork{x}=n}A_x\otimes F(x)
$$
where $A_x\cong\Z$.

Cell posets also enjoy the
$\Diamond$-property: if $u$ is an $(i+1)$-cell and $v$ an $(i-1)$-cell
($0\leq i\leq\dim X$) with
$u<v$, then there are exactly two $i$-cells $z_1,z_2$
with $u<z_i<v$ (if $i=0$ then there is an $u$ but
no $v$, and if $i=\dim X$ then there is a $v$ but no $u$). 

\begin{figure}
  \centering
\begin{pspicture}(0,0)(12,5)
\rput(-1,0){
\psline[linestyle=dotted,linewidth=1pt](3.5,4.3)(9,4.3)
\psline[linestyle=dotted,linewidth=1pt](3.5,3.6)(5.2,3.6)
\psline[linestyle=dotted,linewidth=1pt](8.6,3)(10.5,3)
\psline[linestyle=dotted,linewidth=1pt](7.9,2.325)(10.5,2.325)
\psline[linestyle=dotted,linewidth=1pt](8.6,1.625)(10.5,1.625)
\rput(4.9,0.25){$x$}\rput(8.5,0.3){$x$}
\rput(3.3,4.3){$0$}\rput(3.3,3.6){$1$}
\rput(10.9,2.3){$i$}\rput(10.9,3){$i+1$}\rput(10.9,1.6){$i-1$}
\rput(7,2.25){
\rput(0,0){\BoxedEPSF{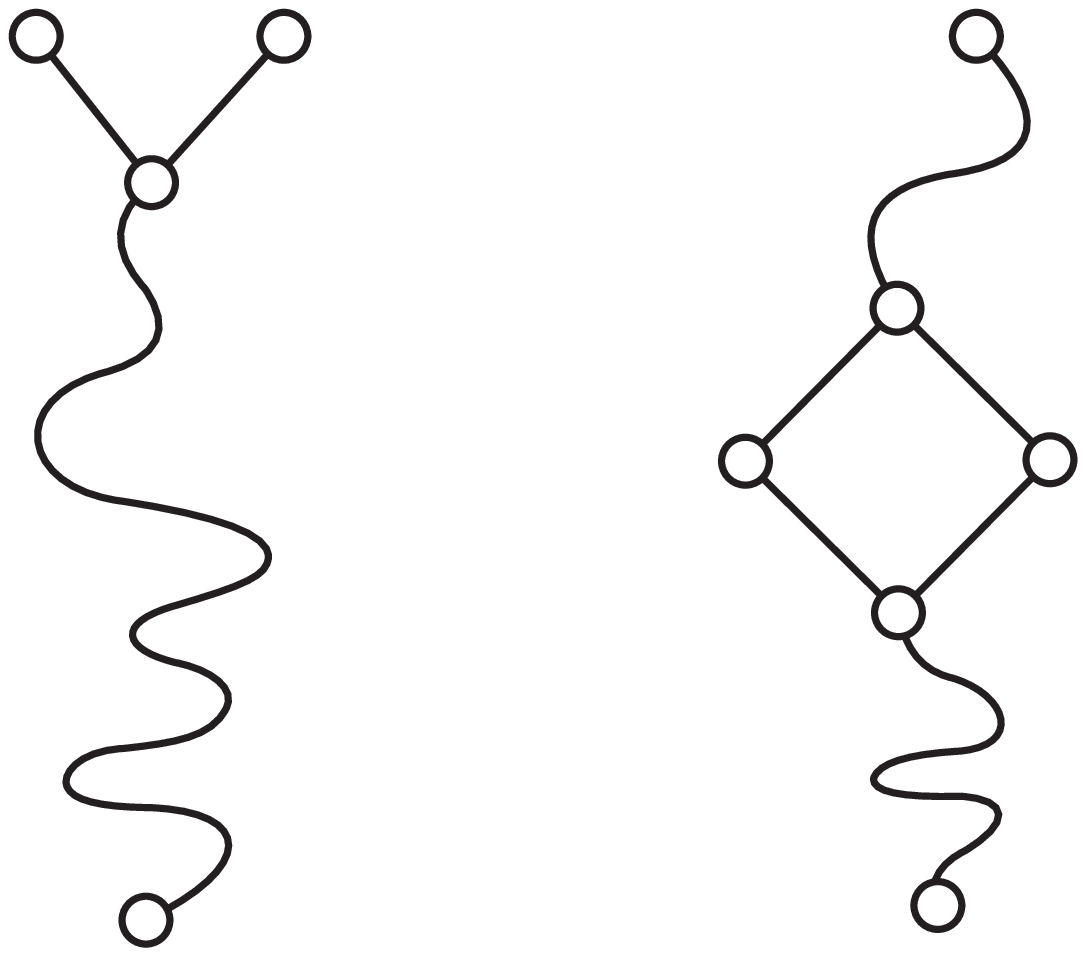 scaled 450}}
}
\rput(5.5,3.6){$u$}\rput(4.7,4.6){$z_1$}\rput(5.8,4.6){$z_2$}
\rput(8.35,1.6){$u$}\rput(8.35,3){$v$}
\rput(7.8,2.05){$z_1$}\rput(9.4,2.05){$z_2$}
}
\end{pspicture}
  \caption{Compatible families in $\BPge{x}$ when $\BP$ is a cell poset.}
  \label{fig:compatible.families}
\end{figure}

A compatible family $B_\tau$ in $\BPge{x}$ thus has one of the two forms illustrated in
Figure \ref{fig:compatible.families}. The group
$A_x$ of Proposition \ref{prop:cellularcomplex.v4} thus has presentation with generators the
$\ss\in N^n_\circ\BPge{x}$ and
relations of the form $\ss+\ss'=0$, where $\ss,\ss'$ are the two poset
sequences running around each side of a $\Diamond$.

We have the description (\ref{eq:22}) of the matrix element
$\delta_x^y:A_y\otimes F(y)\rightarrow A_x\otimes F(x)$ of the
differential, but we can also explicitly describe it as a map
$\Z\otimes F(y)\rightarrow \Z\otimes F(x)$ as follows.
If $\ss,\ss'\in N^n_\circ\BPge{x}$ then $\ss$ can be turned into
$\ss'$ by successively moving poset sequences across $\Diamond$'s -- i.e.:
replacing one sequence in Figure \ref{fig:compatible.families} by the
other (sketch of proof: $\BPge{x}$ is the cell poset of an
$n$-ball with the induced decomposition of the bounding
$(n-1)$-sphere; the $\ss\in N^n_\circ\BPge{x}$ correspond to simplices
in the barycentric subdivision and moving them across $\Diamond$'s
corresponds to exchanging simplices sharing a common face of dimension
$n-2$). In particular $\ss'=\pm\ss$, with the sign 
determined by the parity number of such maneuvers, and $A_x\cong\Z$ is
freely generated by any of the $\ss\in N^n_\circ\BPge{x}$.

For each $x$, fix a free generator $\ss_x$ of
$A_x$ and for each $x\prec y$ let $[x,y]=\pm 1$ be determined by
\begin{equation}
  \label{eq:27}
x\ss_y=[x,y]\ss_x,  
\end{equation}
where $x\ss_y$ is the result of pre-appending $x$ onto $\ss_y$. 
If $x,y,y',z$ form a $\Diamond$-configuration then one can check that
\begin{equation}
  \label{eq:9}
[x,y][y,z]=-[x,y'][y',z].  
\end{equation}
The
matrix element $\delta_x^y:A_y\otimes F(y)\ra A_x\otimes F(x)$ is then
given by
$$
\delta_x^y:\ss_y\otimes a
\mapsto
x\ss_y\otimes (-1)^n F_x^y(a)
=[x,y]\ss_x\otimes (-1)^n F_x^y(a).
$$
By \cite[Chapter IX, Theorem 7.2]{Massey91} orientations can be chosen
for the cells of $X$ in such a way that the $[x,y]$ -- which are
defined above in a
purely combinatorial way -- are the incidence numbers of the cells. 

\subsection{Posets with unique extrema and Khovanov homology}
\label{section4:3}

If $\BP$ is a poset with a unique extremal -- that is, maximal
  or minimal -- 
element then the classifying space $\geo{N^*\BP}$ is contractible;
indeed if $x$ is the extremal element then $\geo{N^*\BP}$
is a cone on $|N^*(\BP\setminus x)|$. If we have a constant presheaf $F=\Delta A$ on $\BP$ then
$\hs^i(\BP;\Delta A)\cong H^i(\geo{N^*\BP},A)$ 
vanishes for $i>0$ and is $\cong A$ 
in degree $0$. 

More generally if $\BP$ has a unique maximum $x$ then the limit functor
$$F\mapsto \invlim_\BP F$$ 
is naturally isomorphic to the evaluation
functor $F\mapsto F(x)$, which is exact. Hence the higher limits $\hs^i(\BP;F)$
vanish for $i>0$ here also.

If instead $\BP$ has a unique minimum, but no unique maximum, then
  given an interesting enough presheaf 
the higher limits can 
be very rich. A good source of examples comes from the Khovanov
homology \cite{Khovanov00} mentioned in the Introduction: we describe
in \cite[Theorem 1]{EverittTurner3} how 
the Khovanov homology of a link diagram with $n$ crossings arises as the cohomology
of the cell poset of the
suspension $X$ of an $(n-1)$-simplex equipped with the Khovanov
presheaf $F_{\kern-0.5mmKh}$ -- see Figure \ref{fig:orange} (and 
also \cite{EverittTurner1}). The 
cellular cochain complex $C^*(\BP_X;F_{\kern-0.5mmKh})$ is then the standard cube
complex found in Khovanov homology, and the $[x,y]$ of
(\ref{eq:27}) are the signs ``sprinkled'' on the cube to make its faces anti-commute.

\begin{figure}
  \centering
\begin{pspicture}(0,0)(12,4.5)
\rput(-1,0){
\rput(4.6,1.8){
\rput(0,0){\BoxedEPSF{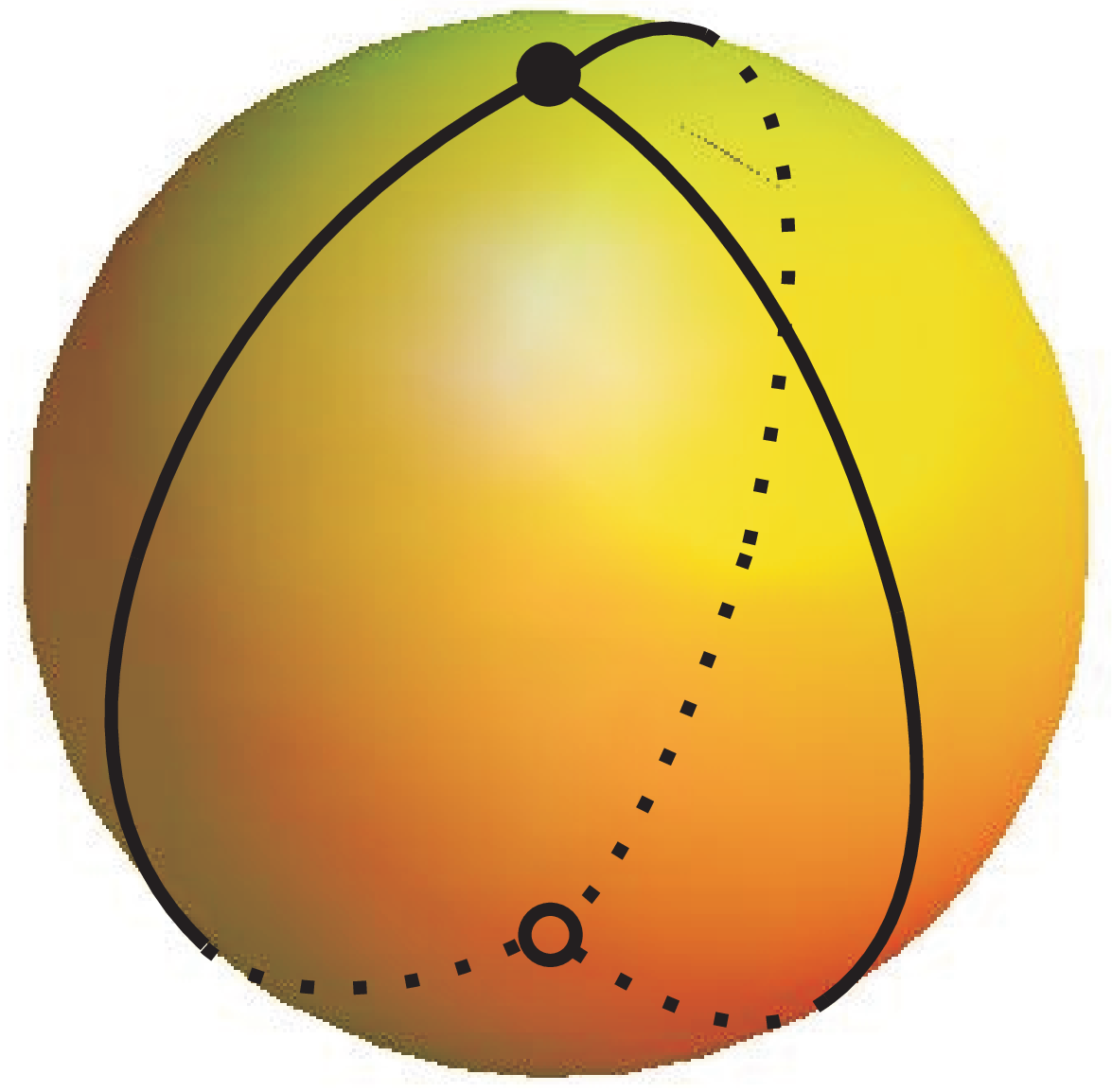 scaled 300}}
\rput(0.1,1.4){$\1$}
\rput(0.15,-0.7){$\1'$}
}
\rput(9.2,2){
\rput(0,0){\BoxedEPSF{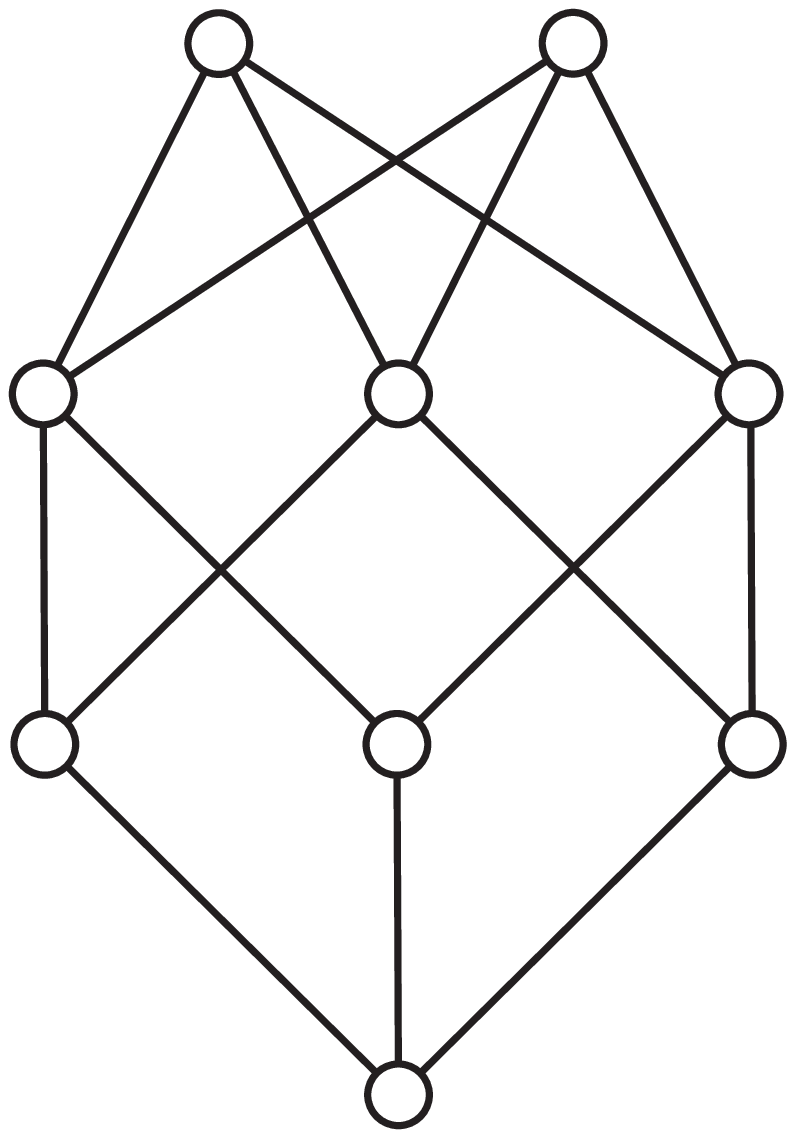 scaled 300}}
\rput(-0.55,1.9){$\1$}
\rput(0.55,1.9){$\1'$}
}}
\end{pspicture}
\caption{The $X$ \emph{(left)} and $\BP_X$ \emph{(right)} for the Khovanov homology of a link
  diagram with $3$ crossings.}
  \label{fig:orange}
\end{figure}

\subsection{The Bruhat order and the symmetric group}
\label{section4:4}

This is another example of a cell poset, arising from a partial order
on a finite Coxeter group. We illustrate with a particular example.

Let $S_{\kern-0.5mm n}$ be the symmetric group and write an $x\in S_{\kern-0.5mm n}$ as a
string $x=x(1)\cdots x(n)$. 
Then $S_{\kern-0.5mm n}\setminus\id$ can be given the structure 
of a cell poset in the following way. If $x,y\in S_{\kern-0.5mm n}\setminus\id$  
then write $x\ra y$
if
$$
x=x(1)\cdots i\cdots j\cdots x(n)
\text{ where }i>j\text{ and }
y=x(1)\cdots j\cdots i\cdots x(n).
$$ 
Define $x\leq y$ when there are $x_i$ with
$$
x=x_0
\ra
x_1
\ra
\cdots
\ra
x_k=y.
$$
The resulting $\leq$ is called the \emph{Bruhat order\/} on $S_{\kern-0.5mm n}$
(actually, our Bruhat order is the 
opposite of that normally found in the literature, but the Bruhat
order is isomorphic to its opposite anyway). For basic facts
concerning the Bruhat order, including some of the constructions
below, see \cite[Chapter 2]{Bjorner-Brenti05}. The corank function is
$\cork{x}=\ell(x)-1$, where $\ell(x)$ is the number
of inversions in $x$: pairs $i>j$ with $x=x(1)\cdots i\cdots j\cdots x(n)$. 
The poset $\BP=S_{\kern-0.5mm n}\setminus\id$
has maxima the $n-1$ transpositions $s_i=1\cdots i+1,i\cdots n$
with $\ell(s_i)=1$ and unique minimum
the permutation $x_0=n\cdots 21$ with $\ell(x_0)=\binom{n}{2}$. 
There is then a regular CW decomposition $X$ of the $(\ell(x_0)-1)$-ball with
$\BP_X=\BP$. 

To describe the cellular complex $C^*(\BP;F)$ for a presheaf $F$ on $\BP$
we need only give a free
generator $\ss_x$ for the group $A_x$ as in
\S\ref{section4:2} and determine the signs
$[x,y]$ of (\ref{eq:27})
for all $x\prec y$ in the Bruhat order.
Let $x=x(1)\cdots i\cdots j\cdots x(n)$ where $i>j$ and call $(i,j)$ a \emph{swap pair\/}
if for each $k$ of the string appearing between $i$ and $j$ we have either
$k<j$ or $k>i$. Then if $y=x(1)\cdots
j\cdots i\cdots x(n)$ we have $x\prec y$, and all the $y$ covering $x$
arise by interchanging swap pairs in this way. Totally order
pairs by
$(n,n-1)>\cdots>(n,2)>\cdots>(3,2)>(n,1)>\cdots>(3,1)>(2,1)$, and 
restrict this ordering to the swap pairs. 
Let
$\ss_x=x\prec\ss_{n-1}\prec\cdots\prec\ss_0$ where $\ss_{i-1}$ is the
result of interchanging the minimal swap pair of $\ss_i$. For example
if $x=4321\in S_{\kern-0.5mm 4}$ then
$$
\ss_x=43\underline{21}\prec 4\underline{31}2\prec
\underline{41}32\prec 14\underline{32}\prec 1\underline{42}3\prec
1243  
$$
with the minimal swap pairs underlined (and $12\underline{43}\prec
1234=\id$). Now to the signs. 
If $x\prec
y$ with $y$ the result of interchanging the minimal swap pair in $x$,
then clearly $[x,y]=1$. If now $x$ has corank $1$ and $y$ is the
result of interchanging a non-minimal swap pair in $x$ then
$[x,y]=-1$ (as $x\ss_y+\ss_x=0$ via a relation of the form given on
the left of Figure \ref{fig:compatible.families}). For a general
covering $x\prec y$
it is possible to find a $\Diamond$-configuration $x,y,y',z$,
so that $[x,y]=-[x,y'][y,z][y',z]$ by (\ref{eq:9}), where
$[x,y'],[y,z]$ and $[y',z]$ are already 
known, the last two by induction on the corank. We leave the details
to the reader. Figure \ref{fig:Bruhat:Eg} illustrates the case $n=4$.

\begin{figure}
  \centering
\begin{pspicture}(0,0)(14,6.5)
\rput(-1,0){
\rput(7,3.25){
\rput(0,0){\BoxedEPSF{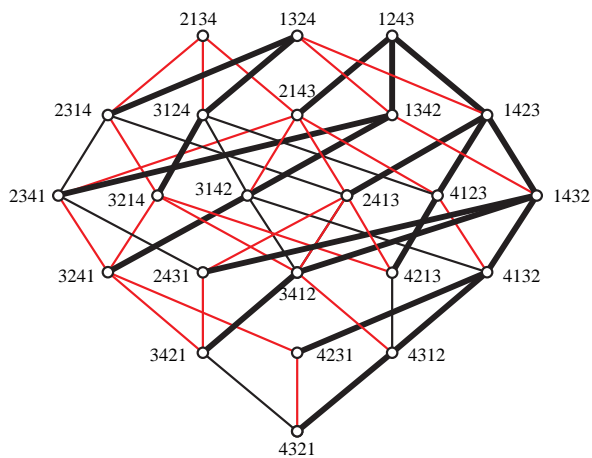 scaled 550}}
\rput(0,-2.85){${\scriptstyle 4321}$}
\rput(-1.70,-1.6){${\scriptstyle 3421}$}\rput(0.5,-1.6){${\scriptstyle
    4231}$}\rput(1.7,-1.6){${\scriptstyle 4312}$} 
\rput(-2.9,-0.55){${\scriptstyle
    3241}$}\rput(-1.65,-0.55){${\scriptstyle
      2431}$}\rput(0,-0.8){${\scriptstyle 3412}$} 
\rput(1.65,-0.55){${\scriptstyle 4213}$}\rput(2.95,-0.55){${\scriptstyle 4132}$}
\rput(-3.55,0.5){${\scriptstyle 2341}$}\rput(-2.25,0.45){${\scriptstyle
      3214}$}\rput(-1.1,0.6){${\scriptstyle 3142}$} 
\rput(1.1,0.45){${\scriptstyle 2413}$}\rput(2.25,0.55){${\scriptstyle
      4123}$}\rput(3.6,0.5){${\scriptstyle 1432}$} 
\rput(-2.95,1.65){${\scriptstyle
    2314}$}\rput(-1.65,1.65){${\scriptstyle
      3124}$}\rput(0,1.9){${\scriptstyle 2143}$} 
\rput(1.65,1.65){${\scriptstyle 1342}$}\rput(2.95,1.65){${\scriptstyle 1423}$}
\rput(-1.3,2.85){${\scriptstyle 2134}$}\rput(0,2.85){${\scriptstyle
      1324}$}\rput(1.3,2.85){${\scriptstyle 1243}$} 
}}
\end{pspicture}
  \caption{$S_{\kern-0.5mm 4}\setminus\id$ equipped with the Bruhat
    order: the thick edges give the generators $\ss_x$; the black
    edges (both thick and thin) are the $x\prec y$ with $[x,y]=1$ and the red edges 
    are the $x\prec y$ with $[x,y]=-1$.} 
  \label{fig:Bruhat:Eg}
\end{figure}

\subsection{Geometric lattices}
\label{section4:5}

A \emph{lattice\/} is a poset $\BP$ such that any two elements
$x$ and $y$ have a supremum (or join) $x\vee y$ and an infimum (or
meet) $x\wedge y$. $\BP$ has \emph{finite length\/} is there is an
absolute bound on the number of elements in any poset chain
$x_0\leq\cdots\leq x_n$. If $\BP$ has finite length and a unique
minimum $\0$, then define a grading by taking $\rk(x)$ to be the
supremum of the lengths of all poset chains from $\0$ to $x$. 
$\BP$ is a \emph{geometric\/} lattice if every element can be
expressed as a join of elements of rank $1$ (called \emph{atoms\/}) and
for any $x,y$ we have
\begin{equation}
  \label{eq:19}
\rk(x\vee y)+\rk(x\wedge y)\leq \rk(x)+\rk(y)
\end{equation}
The motivating example is the linear subspaces of a vector space $V$
over some field $k$, ordered by reverse inclusion. 
See \cite[Chapter IV]{Birkhoff79} or \cite[Chapter 3]{Stanley12} for
general facts about geometric lattices. 

Let $\BP$ be a locally finite geometric lattice -- in particular
$\BP$ is finite and hence also has a unique maximum $\1$, the join of
the elements of $\BP$. In the light of Section \ref{section4:3}, let
$\BQ=\BP\setminus\1$. For every $x\in\BP$ the interval $\BPge{x}$ is
also a geometric lattice. Let $\mu_x:=(-1)^{\cork{x}-1}\mu(x,\1)$ where $\mu$
is the M\"{o}bius function of $\BP$ and $\cork{\cdot}$ is the corank
function of $\BQ$. For any $x$ the space $\geo{N^*\BQ_{>x}}$ has
the homotopy type of a bouquet of $\mu_x$ spheres of dimension
$\cork{x}-1$
(\cite{Quillen78}, see also
\cite[Theorem 4.109]{Orlik-Terao92} and \cite{Bjorner82}),
hence 
$$
\widetilde{\hs}^{\cork{x}-1}(\BQ_{>x};\Delta\Z) \cong \Z^{\mu_x}
$$
and the homology vanishes in all other degrees. Thus geometric lattices (minus
their maximal elements) are cellular, and for any presheaf $F$ on $\BQ$
we have 
$$
C^n(\BQ;F)
\cong\bigoplus_{\cork{x}=n}A_x\otimes F(x)
$$
where $A_x\cong\Z^{\mu_x}$. One can find explicit free generators for $A_x$ using
$R$-labelings \cite[Theorem 3.13.2]{Stanley12} and hence an explicit
description of the differential from Proposition \ref{prop:cellularcomplex.v4}.




\bibliography{Everitt_Turner}{}
\bibliographystyle{plain}



%
%
%
%

\end{document}